\journalname{Japan J.\ Indust.\ Appl.\ Math.}
\numberwithin{equation}{section}
\numberwithin{theorem}{section}
\newcommand{\R}{\mathbb R}
\newcommand{\N}{\mathbb N}
\newcommand{\bfx}{\mathbf{x}}
\newcommand{\bfy}{\mathbf{y}}
\newcommand{\bff}{\mathbf{f}}
\newcommand{\bfg}{\mathbf{g}}
\newcommand{\lam}{\lambda}
\newcommand{\I}{\mathcal{I}}
\newcommand{\Icr}{\mathcal{I}_{K}^{CR}}
\newcommand{\E}{\mathcal{E}}
\newcommand{\hv}{\hat{v}}
\newcommand{\PP}{\mathcal{P}}
\newcommand{\dd}{\mathrm{d}}
\newcommand{\hK}{\widehat{K}}
\newcommand{\tK}{\widetilde{K}_\alpha}
\newcommand{\Ka}{K_\alpha}
\newcommand{\F}{\mathbf{F}}
\newcommand{\G}{\mathbf{G}}
\begin{document}

\title{Approximating surface areas by interpolations on triangulations}

\author{Kenta Kobayashi \and Takuya Tsuchiya}

\institute{Kenta Kobayashi \at
           Graduate School of Commerce and Management, \\
           Hitotsubashi University, Japan \\
           \email{\texttt{kenta.k@r.hit-u.ac.jp}} \and
           Takuya Tsuchiya \at
           Graduate School of Science and Engineering, \\
           Ehime University, Japan \\
           \email{\texttt{tsuchiya@math.sci.ehime-u.ac.jp}}
}

\date{Received: date / Accepted: date}

\maketitle

\begin{abstract}
We consider surface area approximations by Lagrange and
Crouzeix-Raviart interpolations on triangulations.  For Lagrange
interpolation, we give an alternative proof for Young's classical result
that claims the areas of inscribed polygonal surfaces converge to the area
of the original surface under the maximum angle condition on
the triangulation.  For Crouzeix--Raviart interpolation we show that the
approximated surface areas converge to the area of the original surface
without any geometric conditions on the triangulation.
\keywords{Surface area, triangulations, Lagrange interpolation,
Crouzeix-Raviart interpolation}
\subclass{65D05, 65N30, 26B15}
\end{abstract}

\section{Introduction}
Let $\Omega \subset \R^2$ be a bounded domain with polygonal boundary
$\partial\Omega$.  For a sufficiently smooth function defined on
$\Omega$, for example $f \in C^1(\overline{\Omega})$, the area $A(f)$ of
its graph $z = f(x,y)$ is computed (and defined) by
\[
   A(f) = \int_\Omega \sqrt{1 + |\nabla f(\bfx)|^2}\, \dd \bfx.
\]
If the smoothness assumption is weakened, however, the definition of
$A(f)$ becomes rather complicated.  (For the definition of surface area
given by Lebesgue, see Section~\ref{sec:def-sur-area}.)
The length of a curve is defined as the limit of the length of its inscribed
polygonal curves. On the contrary, the area of a surface cannot be
defined as the limit of inscribed polygonal surfaces.  In the 1880s, Schwarz
and Peano independently presented a well-known counter-example.

\begin{figure}[thb]
\begin{center}
   \psfrag{A}[][]{$2\pi r$}
   \psfrag{B}[][]{$H$}
  \includegraphics[width=4.5cm]{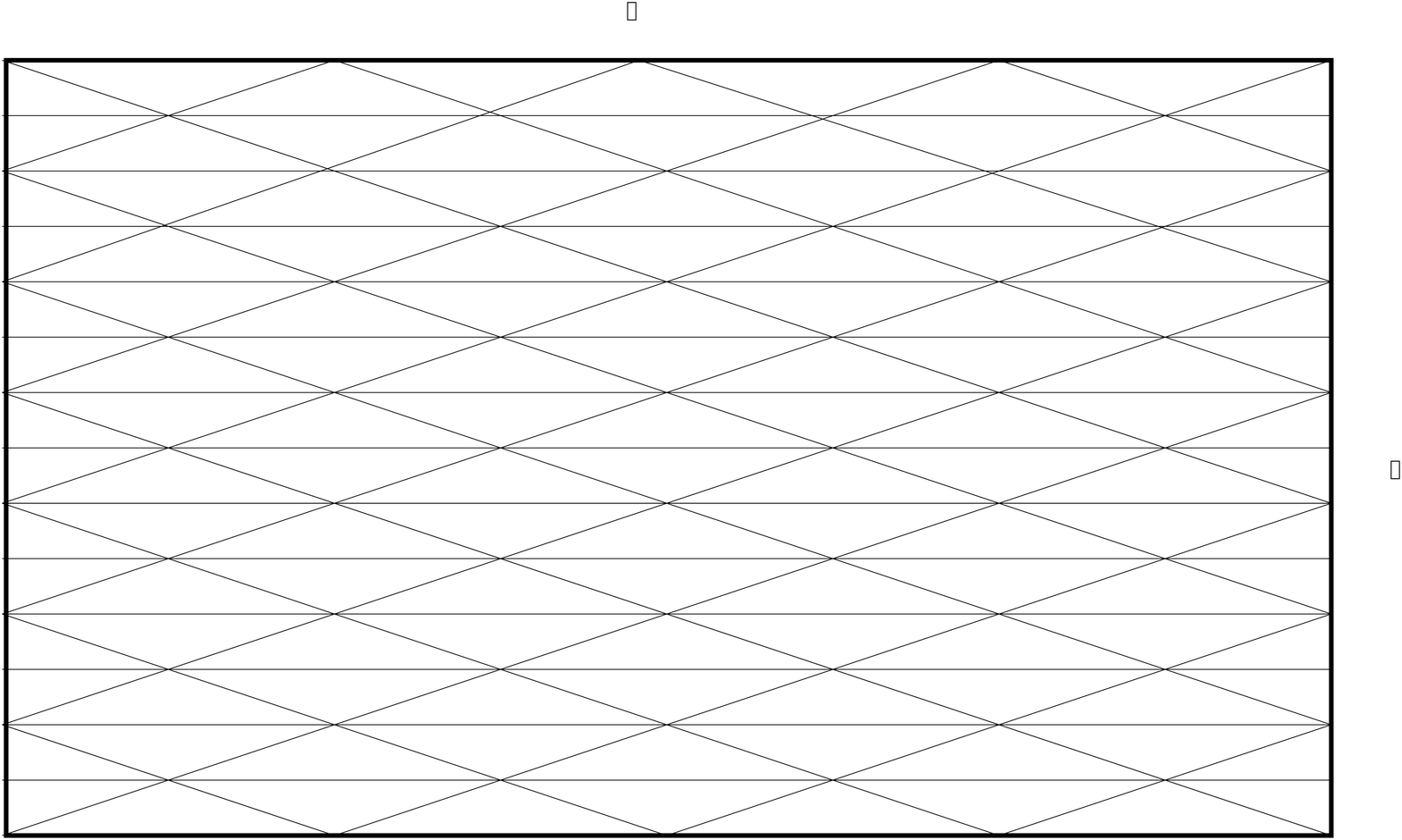}
  \hspace{0.5cm}
  \includegraphics[width=3cm,angle=90]{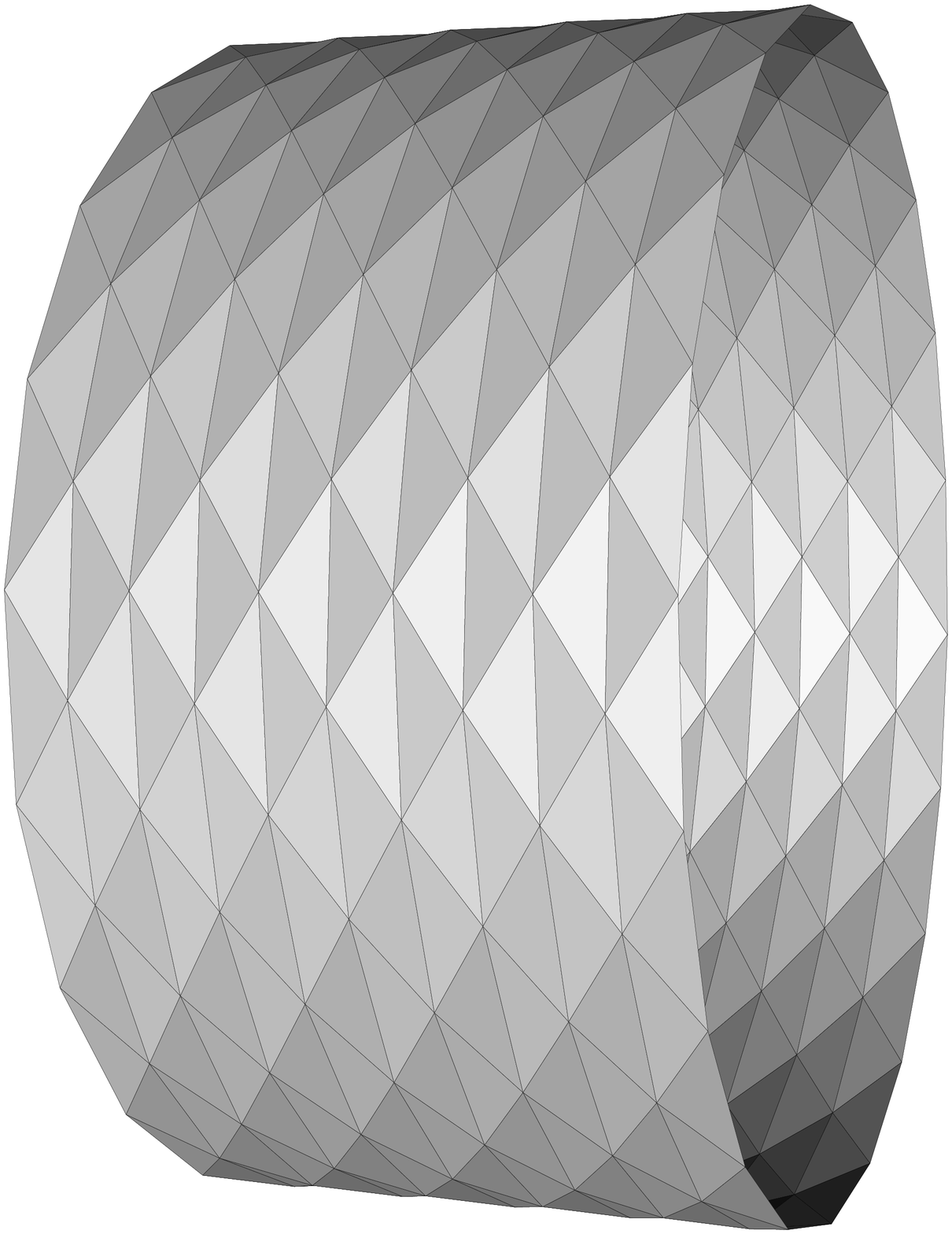} 
 \caption{Schwarz--Peano's example, called ``Schwarz's lantern''.}
 \label{fig1}
\end{center} 
\end{figure}

Let $\Omega$ be a rectangle of height $H$ and width $2\pi r$.
Let $m$, $n$ be positive integers.  Suppose that this rectangle
is divided into $m$ equal strips, each of height $H/m$.
Each strip is then divided into isosceles triangles whose
base length is $2\pi r/n$, as depicted in Figure~\ref{fig1}.
Then, the piecewise linear map $\varphi_\tau:\Omega \to \R^3$
is defined by ``rolling up this rectangle'' so that
all vertices are on the cylinder of height $H$ and radius $r$.
The cylinder is then approximated by the inscribed polygonal surface,
which consists of $2mn$ congruent isosceles triangles.
Because the height of each triangle is
$\sqrt{(H/m)^2 + r^2(1 - \cos(\pi/n))^2}$
and the base length is $2r\sin(\pi/n)$, 
the area $A_E$ of the inscribed polygonal surface
\footnote{The sum of areas of triangles. The subscript `$E$' of $A_E$
stands for `Elementary'.} is
\begin{align*}
  A_E & = 2 m n r \sin \frac{\pi}{n} 
  \sqrt{\left(\frac{H}{m}\right)^2 + r^2 
        \left(1 - \cos \frac{\pi}{n}\right)^2} \\
   & = 2 \pi r \frac{\sin \frac{\pi}{n}}{\frac{\pi}{n}}
  \sqrt{H^2 + \frac{\pi^4r^2}{4} \left(\frac{m}{n^2}\right)^2
   \left(\frac{\sin \frac{\pi}{2n}}{\frac{\pi}{2n}}\right)^4 }.
\end{align*}
If $m,n \to \infty$, we observe
\begin{align*}
& \lim_{m,n \to \infty} A_E = 2\pi r \sqrt{H^2 + \frac{\pi^4 r^2}4 
    \lim_{m,n \to \infty} \left(\frac{m}{n^2}\right)^2},
\end{align*}
and in particular, 
\begin {align*}
  \lim_{m,n \to \infty} A_E = 2\pi r H  \;\; \text{ if and only if } 
%  \Longleftrightarrow
  \lim_{m,n \to \infty} \frac{m}{n^2} = 0.
\end{align*}

The example given by Schwarz and Peano has convinced mathematicians of
the need to impose some geometric assumption on such triangulations to
approximate the surface area by Lagrange interpolation.  The known
geometric conditions on triangulations are as follows:  Let
$\{\tau_k\}_{k=1}^\infty$ be a sequence of triangulations of $\Omega$
such that $\lim_{k\to \infty}|\tau_k|=0$, where
$|\tau_k| := \max_{K\in \tau_k}\mathrm{diam}K$.  For a given continuous
function $f \in C(\overline{\Omega})$, its Lagrange interpolation on the
triangulation $\tau_k$ is denoted by $\I_{\tau_k}^L f$.
We denote by $A_L(f)$ the surface area of the
graph $z = f(x,y)$ in the sense of Lebesgue.  
We also denote by $A_E(\I_{\tau_k}^L f)$ the surface area of
the Lagrange interpolation $\I_{\tau_k}^L f$.

\vspace{0.3cm}
\textbf{Minimum angle condition.}  Let $\theta_K^m$ be
the minimum inner angle of a triangle $K \in \tau_k$.  Suppose that 
there exists a contant $\theta_1$, $0 < \theta_1 \le \pi/3$, such that
\begin{align*}
     \theta_1 \le \theta_K^m, \quad \forall K \in \tau_k, \quad
     k = 1,2,\cdots.
\end{align*}
Then, $\{\tau_k\}$ is said to satisfy the \textit{minimum angle condition}. 
Rademacher showed \cite{Rademacher1}, \cite{Rademacher2} that if $\{\tau_k\}$
satisfies the minimum angle condition, then, for
$f \in W^{1,\infty}(\Omega)$, we have
\begin{align}
    \lim_{k \to \infty} A_E(\I_{\tau_k}^L f) = A_L(f).    
      \label{area-convergence}
\end{align}

\vspace{0.3cm}
\textbf{Maximum angle condition.}  Let $\theta_K^M$ be
the maximum inner angle of a triangle $K \in \tau_k$.  Suppose that 
there exists a constant $\theta_2$, $\pi/3 \le \theta_2 < \pi$, such that
\begin{align*}
     \theta_K^M \le \theta_2, \quad \forall K \in \tau_k, \quad
     k = 1,2,\cdots.
\end{align*}
Then, $\{\tau_k\}$ is said to satisfy the \textit{maximum angle condition}. 
Young showed \cite{Young} that if $\{\tau_k\}$ satisfies the maximum
angle condition, then we have \eqref{area-convergence}
for $f \in W^{1,\infty}(\Omega)$.

\vspace{0.3cm}
Note that the minimum and maximum angle conditions were rediscovered by
researchers of finite element methods some 50 years after Rademacher and
Young \cite{KobayashiTsuchiya2}.  For the above mentioned results,
readers are referred to \cite{Cesari}, \cite{Rado1}, \cite{Rado2}.

Recently, the authors presented the following result.

\vspace{0.2cm}
\textbf{Circumradius condition.}  Let $R_K$ be the circumradius
of the triangle $K \in \tau_k$.  Suppose that
\begin{align*}
     \lim_{k \to \infty} \max_{K \in \tau_k} R_K = 0.
\end{align*}
Then, $\{\tau_k\}$ is said to satisfy the \textit{circumradius condition}. 

\vspace{0.2cm}
Let $R_{m,n}$ be the circumradius of the triangles in
Schwarz's lantern.  It has been shown in \cite{KobayashiTsuchiya2} that
\begin {align*}
  \lim_{m,n \to \infty} A_E = 2\pi r H  \;\; \text{ if and only if } 
%  \Longleftrightarrow
  \lim_{m,n \to \infty} R_{m,n} = 0,
\end{align*}
and \eqref{area-convergence} was proved under the circumradius condition
for $f \in W^{2,1}(\Omega)$.  From these facts, we can infer that the
circumradius condition is the best possible geometric condition
of triangulations to assure the convergence in \eqref{area-convergence}.
 
One of the aims of this paper is to give an alternate proof of Young's
result using the modern theory of finite element methods.  That is, 
in Section~\ref{sec:Lagrange}, we
will show \eqref{area-convergence} for $f \in W^{1,\infty}(\Omega)$
under the maximum angle condition using the results given in
\cite{KobayashiTsuchiya3}.

Crouzeix--Raviart interpolation is defined using integrals of
the given function on the edges of triangles.
The other, more important aim of this paper is to show that the surface area
$A_L(f)$ is approximated by the Crouzeix--Raviart interpolation
$I_{\tau_k}^{CR}f$ without any geometric conditions on the triangulation.
To this end, we develop the error analysis of Crouzeix--Raviart
interpolation in Section~\ref{sec:CrouzeixRviart}. 
Using the error analysis of Crouzeix--Raviart interpolation,
the main theorem (Theorem~\ref{thm:main}) of this paper is stated and
proved in Section~\ref{main-section}.  In Section~\ref{sec:parametric},
we will show that the results obtained in
Sections~\ref{sec:Lagrange} and \ref{main-section} for the graphs
of functions on $\Omega$ hold for parametric surfaces.
Finally, in Section~\ref{numericalexpriments}, we present the results of
numerical experiments to confirm the theoretical results.
We also mention some concluding remarks regarding further research.

\section{Preliminaries}
\subsection{Notation and the basic definitions}
Let $\R^d$ be the $d$-dimensional Euclidean space.
We denote the Euclidean norm of $\bfx \in \R^d$ by
$|\bfx|$.  Let $\R^{d*} := \{l:\R^d \to \R : l \text{ is linear}\}$
be the dual space of $\R^d$.  We always regard $\bfx \in \R^d$ as a
column vector and $\mathbf{a} \in \R^{d*}$ as a row vector.
For a matrix $A$ and $\bfx \in \R^d$, $A^\top$ and $\bfx^\top$
denote their transpositions.
For a differentiable function $f$ with $d$ variables,
its gradient $\nabla f = \mathrm{grad} f \in \R^{d*}$ is the row vector
\[
  \nabla f = \nabla_\bfx f := 
  \left(\frac{\partial f}{\partial x_1}, ..., 
    \frac{\partial f}{\partial x_d}\right), \qquad
    \bfx := (x_1, ..., x_d)^\top.
\]

Let $\N_{0}$ be the set of nonnegative integers.
For $\delta = (\delta_1,...,\delta_d) \in (\N_{0})^d$,
the multi-index $\partial^\delta$ of partial differentiation 
(in the sense of distribution) is defined by
\[
    \partial^\delta = \partial_\bfx^\delta
    := \frac{\partial^{|\delta|}\ }
   {\partial x_1^{\delta_1}...\partial x_d^{\delta_d}}, \qquad
   |\delta| := \delta_1 + ... + \delta_d.
\]
If $d=2$, we use the notation $f_x$ and $f_y$ instead of 
$\partial f/\partial x$ and $\partial f/\partial y$, respectively.

Let $\Omega \subset \R^2$ be a (bounded) domain.  The usual Lebesgue
space is denoted by $L^p(\Omega)$ for $1 \le p \le \infty$.
For a positive integer $k$, the Sobolev space $W^{k,p}(\Omega)$ is
defined by
$\displaystyle
  W^{k,p}(\Omega) := 
  \left\{v \in L^p(\Omega) \, | \, \partial^\delta v \in L^p(\Omega), \,
   |\delta| \le k\right\}$.
The norm and semi-norm of $W^{k,p}(\Omega)$ are defined,
for $1 \le p < \infty$, by
\begin{gather*}
  |v|_{k,p,\Omega} := 
  \biggl(\sum_{|\delta|=k} |\partial^\delta v|_{0,p,\Omega}^p
   \biggr)^{1/p}, \quad   \|v\|_{k,p,\Omega} := 
  \biggl(\sum_{0 \le m \le k} |v|_{m,p,\Omega}^p \biggr)^{1/p},
\end{gather*}
and $\displaystyle   |v|_{k,\infty,\Omega} := 
  \max_{|\delta|=k} \left\{\mathrm{ess}
   \sup_{\hspace{-5mm}\bfx \in\Omega}|\partial^\delta v(\bfx)|\right\}$,
 $\displaystyle   \|v\|_{k,\infty,\Omega} := 
  \max_{0 \le m  \le k} \left\{|v|_{m,\infty,\Omega}\right\}$.

Let $\bff:\Omega \to \R^d$ with $\bff = (f^1,\cdots,f^d)$.  If
$f^i \in W^{k,p}(\Omega)$, $i=1,\cdots,d$, we write $\bff$ as
$\bff \in W^{k,p}(\Omega;\R^d)$.  Their norms are defined similarly.

\subsection{Triangulation of bounded polygonal domains and Lagrange
 and Crouzeix--Raviart interpolations}
Throughout this paper, $K$ is a triangle in $\R^2$.  Let
$\Omega \subset \R^2$ be a bounded polygonal domain.  A triangulation
$\tau$ of $\Omega$ is a set of triangles that satisfies the following
properties.
\begin{itemize}
 \item $\displaystyle \overline\Omega = \bigcup_{K\in\tau} K$.
 \item If $K_1$, $K_2 \in \tau$, we have either
 $K_1 \cap K_2 = \emptyset$, or $K_1 \cap K_2$ is a vertex or
an edge of both $K_1$ and $K_2$.
\end{itemize}
Because of the second property, the triangulations discussed here are
sometimes called \textit{face-to-face} triangulations.
For a triangulation $\tau$, the fineness $|\tau|$ is defined by
\[
     |\tau| := \max_{K \in \tau} \mathrm{diam}K.
\]
We denote by $\PP_1$ the set of all polynomials with two variables whose
orders are at most $1$.  For a triangulation $\tau$ of $\Omega$, we
define the set $S_{\tau}$ of all piecewise linear continuous functions by
\[
    S_\tau := \left\{f \in C^0(\overline\Omega) \bigm| f|_K \in \PP_1, 
     \forall K \in\tau \right\}.
\]

Let $\bfx_i$, $i=1,2,3$ be vertices of a
triangle $K$.  Let $e_i$ be the edge of $K$ opposite to $\bfx_i$.  For a
continuous function $f \in C(K)$, the Lagrange interpolation
$\I_K^L f \in \PP_1$ on $K$ is defined by
$f(\bfx_i) = (\I_K^L f)(\bfx_i)$, $i=1,2,3$.  It is clear that,
for $f \in C(\overline{\Omega})$ and a triangulation $\tau$ of
$\Omega$, we can define the Lagrange interpolation
$\I_\tau^L f \in S_\tau$ as
\begin{equation*}
    \I_\tau^L f\big|_K = \I_K^L f, \qquad \forall K \in \tau.
\end{equation*}

Next, let the polynomial $\theta_i \in \PP_1$, $i=1,2,3$ be
defined by
\begin{equation*}
   \int_{e_i} \theta_i(\bfx) \dd s = 1, \qquad
   \int_{e_i} \theta_j(\bfx) \dd s = 0, \quad i \neq j.
\end{equation*}
Using the barycentric coordinate $\lam_i(\bfx)$ on $K$,
this can be written as
\[
    \theta_i(\bfx) := \frac{1}{|e_i|}(1 - 2 \lam_i(\bfx)).
\]
For a function $v \in W^{1,1}(K)$ on $K$, the (non-conforming)
Crouzeix--Raviart interpolation $\Icr v$ is defined by

\begin{equation*}
   \Icr v := \sum_{i=1}^3 
   \left( \int_{e_i} v\, \dd s\right) \theta_i.
\end{equation*}
Note that $\Icr v$ is well-defined because the trace operator
$\gamma_i: W^{1,1}(K) \to L^1(e_i)$ is continuous.
Moreover, we have
\begin{equation*}
   \int_{e_i}\left(v - \Icr v\right) \dd s = 0, \qquad
    i = 1, 2, 3.
\end{equation*}
The  Crouzeix--Raviart interpolation $\Icr v \in \PP_1$
may be defined using this equality.  The global (non-conforming)
Crouzeix--Raviart interpolation $\I_\tau^{CR}f \in L^\infty(\Omega)$ on
$\tau$ is defined by
\begin{equation*}
    \I_\tau^{CR} f\big|_K = \I_K^{CR} f, \qquad \forall K \in \tau.
\end{equation*}
Note that $\I_\tau^{CR} f$ is not continuous in general.  Let $K_1$,
$K_2 \in \tau$ be two adjacent triangles in $\tau$.  Then, on
$e = K_1 \cap K_2$, $\I_\tau^{CR} f$ is continuous only at the
midpoint of $e$.  In Figure~\ref{fig2}, we show the graphs of Lagrange
and Crouzeix-Raviart interpolations of the function
$f(x,y) = \sqrt{a^2 - x^2}$, $a = 1.1$ on a triangulation on
$\Omega:= (-1,1)\times(-1,1)$, similar to the one depicted in
Figure~\ref{fig1}.

\vspace{-0.1cm}
\begin{figure}[thbp]
\begin{center}
  \includegraphics[width=5.6cm]{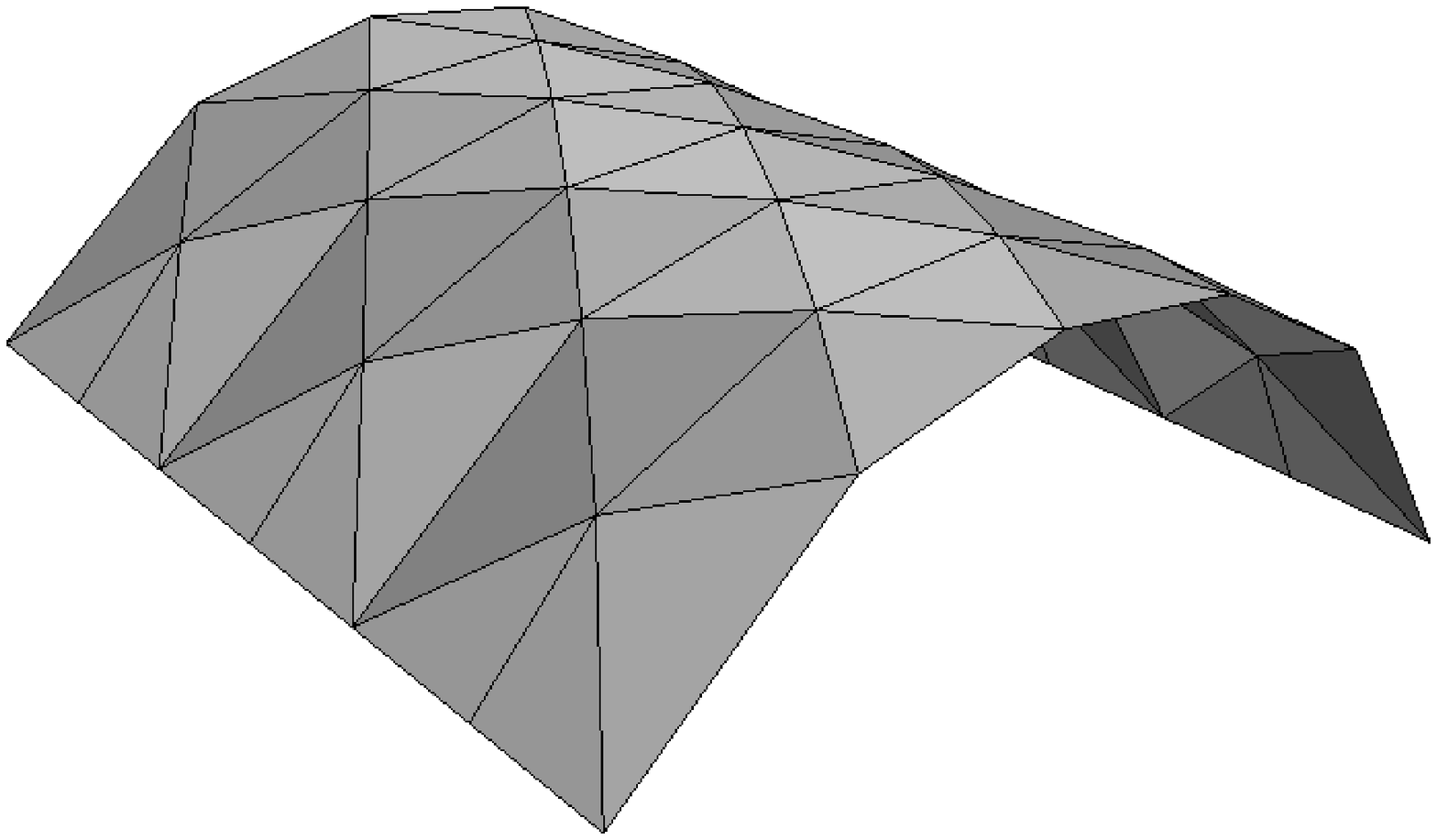}
  \hspace{0.5cm}
  \includegraphics[width=5.6cm]{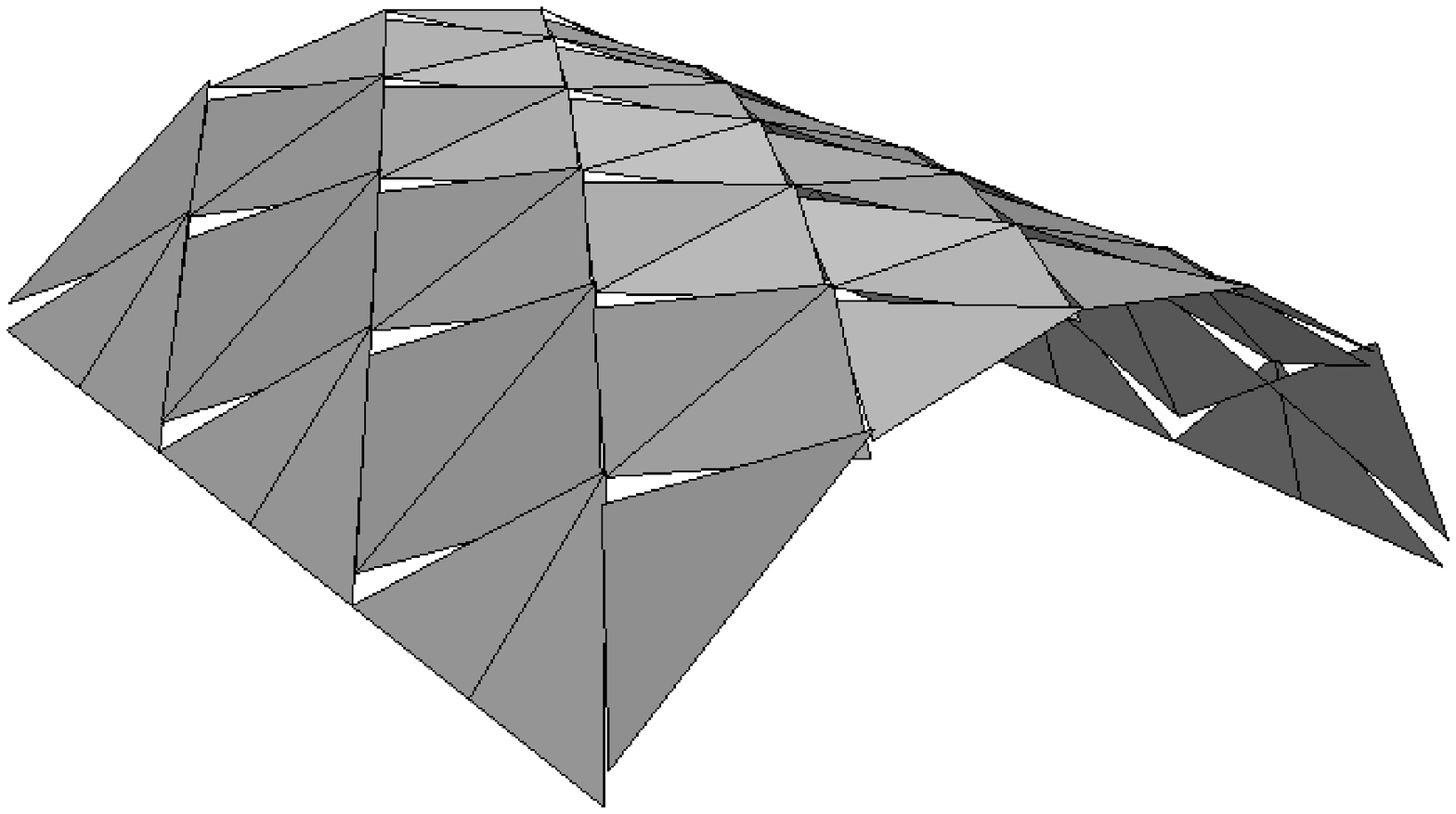} 
%  \hspace{0.5cm}
%  \includegraphics[width=3.5cm]{CR_fig3.eps} 
 \caption{Lagrange and Crouzeix--Raviart interpolations.}
 \label{fig2}
\end{center} 
\end{figure}

\vspace{-0.1cm}
  For the definitions of Lagrange and Crouzeix--Raviart
interpolations, readers are referred to textbooks on finite
element methods, such as \cite{BrennerScott}, \cite{Ciarlet},
and \cite{ErnGuermond}.

\subsection{Lebesgue's definition of the surface area and Tonelli's theorem}
\label{sec:def-sur-area}
At present, the most general definition of surface area is 
that of Lebesgue.  Let $\Omega:=(a,b)\times (c,d) \subset \R^2$ be a
rectangle and $\tau_n$ be a sequence of triangulations of $\Omega$ such
that $\lim_{n \to \infty}|\tau_n| = 0$.  Let
$f \in C^0(\overline{\Omega})$ be a given continuous function.
Let $f_n\in S_{\tau_n}$ be such that $\{f_n\}_{n=1}^\infty$ converges
uniformly to $f$ on $\overline{\Omega}$.  Note that the graph of
$z = f_n(x,y)$ is a set of triangles, and its area is defined as a sum of
these triangular areas.  We denote this area by $A_E(f_n)$, and have
\[
   A_E(f_n) = \int_{\Omega} \sqrt{1 + |\nabla f_n|^2} \dd x.
\]
Let $\Phi_f$ be the set of all such sequences
 $\{(f_n,\tau_n)\}_{n=1}^\infty$.
Then, the area $A_L(f) = A_L(f;\Omega)$ of the graph $z=f(x,y)$
is defined by
\[
   A_L(f) = A_L(f;\Omega):= \inf_{\{(f_n,\tau_n)\} \in \Phi_f} 
  \liminf_{n\to \infty}A_E(f_n).
\]
This $A_L(f)$ is called the \textbf{surface area of} $z = f(x,y)$ 
\textbf{in the Lebesgue sense}.
For a fixed $f$, $A_L(f;\Omega)$ is additive and continuous
with respect to the domain $\Omega$.
Tonelli presented the following theorem.

For a continuous function $f\in C^0(\overline{\Omega})$, we define
$W_1(x)$, $W_2(y)$ by
\begin{align*}
 W_1(x) := \sup_{\tau(y)} \sum_{i} |f(x,y_{i-1}) - f(x,y_i)|,
   \quad x \in (a,b),\\
 W_2(y) := \sup_{\tau(x)} \sum_{j} |f(x_{j-1},y) - f(x_j,y)|,
   \quad y \in (c,d),
\end{align*}
where $\tau(y)$, $\tau(x)$ are the subdivisions
 $c = y_0 < y_1 < \cdots < y_N = d$ and
 $a = x_0 < x_1 < \cdots < x_M = b$, respectively, and `$\sup$' is
taken for all such subdivisions.  Then, a function $f$ has 
\textbf{bounded variation in the Tonelli sense} if
\[
   \int_a^b W_1(x) \dd x + \int_c^d W_2(y) \dd y < \infty.
\]
Additionally, a function $f$ is said to be
\textbf{absolutely continuous in the Tonelli sense} if, 
for almost all $y \in (c,d)$ and $x \in (a,b)$,
the functions $g(x) := f(x,y)$ and $h(y) := f(x,y)$
are absolutely continuous on $(a,b)$ and $(c,d)$, respectively.
The following theorem is well-known.

\begin{theorem}[Tonelli] \label{Tonelli}
For a continuous function $f \in C(\overline{\Omega})$ defined on a
rectangular domain $\Omega$, its graph $z = f(x,y)$ has
finite area $A_L(f) < \infty$ if and only if $f$ has bounded variation
in the Tonelli sense. If this is the case, we have
\begin{equation}
   A_L(f) \ge \int_\Omega \sqrt{1 + |\nabla f(\bfx)|^2}\, \dd \bfx.
  \label{tonelli}
\end{equation}
In the above inequality, the equality holds if and only if
$f$ is absolutely continuous in the Tonelli sense.
\end{theorem}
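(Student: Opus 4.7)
The plan is to follow Tonelli's classical strategy, organized into three parts: (a) the equivalence between $A_L(f)<\infty$ and bounded Tonelli variation, (b) the inequality \eqref{tonelli}, and (c) the characterization of the equality case. Throughout, the essential bridge between the Lebesgue definition and one-dimensional variations is a projection inequality on linear pieces.

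For part (a), I would start from the fact that on a triangle $K$ on which $f_n$ is affine, the graph triangle has area $|K|\sqrt{1+|\nabla f_n|^2}$ while its projections onto the $xy$-, $yz$-, and $xz$-planes have areas $|K|$, $|K||\partial_x f_n|$, and $|K||\partial_y f_n|$. The Cauchy projection identity combined with the Cauchy--Schwarz inequality gives
\[
  |K|\sqrt{1+|\nabla f_n|^2}\;\ge\;\tfrac{1}{\sqrt{3}}\bigl(|K|+|K||\partial_x f_n|+|K||\partial_y f_n|\bigr),
\]
and summing over $K\in\tau_n$ yields $A_E(f_n) \ge \tfrac{1}{\sqrt{3}}\bigl(|\Omega|+\int_\Omega|\partial_x f_n|\,\dd\bfx+\int_\Omega|\partial_y f_n|\,\dd\bfx\bigr)$. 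If $A_E(f_n)$ is bounded, then by Fubini the one-dimensional variations of $f_n(x,\cdot)$ and $f_n(\cdot,y)$ are bounded for almost every slice, and the lower semicontinuity of the classical one-dimensional total variation under uniform convergence transfers the bound to $f$. The converse is constructive: starting from the Tonelli bound, one builds an admissible sequence $(f_n,\tau_n)$ by convolution (or by piecewise bilinear interpolation on a rectangular grid followed by triangulation), and estimates $A_E(f_n)$ directly in terms of $\int_a^b W_1(x)\,\dd x+\int_c^d W_2(y)\,\dd y$.

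For part (b), I would appeal to lower semicontinuity of convex-integrand functionals. Because $(p,q)\mapsto\sqrt{1+p^2+q^2}$ is convex, the classical Serrin/Reshetnyak theorem ensures that $g\mapsto\int_\Omega\sqrt{1+|\nabla g|^2}\,\dd\bfx$, suitably relaxed to BV, is lower semicontinuous with respect to $L^1$-convergence. Since uniform convergence on the bounded rectangle $\Omega$ implies $L^1$-convergence, and the $L^1$-bound on $\nabla f_n$ furnished by part (a) guarantees tightness, one obtains $\liminf_n A_E(f_n) \ge \int_\Omega\sqrt{1+|\nabla f|^2}\,\dd\bfx$ for every $(f_n,\tau_n)\in\Phi_f$. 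Taking the infimum over $\Phi_f$ gives \eqref{tonelli}.

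For part (c), the direction ``absolutely continuous in the Tonelli sense $\Rightarrow$ equality'' I would prove by exhibiting an optimal sequence: the slicewise absolute continuity together with Fubini lets one construct, via convolution or direct piecewise linear interpolation on a refined rectangular grid, an admissible sequence $(f_n,\tau_n)\in\Phi_f$ with $f_n\to f$ uniformly and $\nabla f_n\to\nabla f$ in $L^1(\Omega;\R^{2*})$, so that $A_E(f_n)\to\int_\Omega\sqrt{1+|\nabla f|^2}\,\dd\bfx$ by dominated convergence applied to $\sqrt{1+|\nabla f_n|^2}-|\nabla f_n|$. Combined with part (b) this gives equality. The reverse direction is where I expect the main difficulty: equality must force the singular part of each slicewise variation to vanish. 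The strategy is to decompose the Tonelli variation of each line-restriction into an absolutely continuous piece $\int|\partial_x f|$ (resp.\ $\int|\partial_y f|$) and a nonnegative singular remainder $S(f)$, then sharpen the projection inequality of part (a) into $\liminf_n A_E(f_n)\ge\int_\Omega\sqrt{1+|\nabla f|^2}\,\dd\bfx+S(f)$, so that equality in \eqref{tonelli} forces $S(f)=0$. Obtaining this quantitatively sharp refinement, which must separate the absolutely continuous contribution from the singular one while remaining stable under uniform convergence, is the principal obstacle of the proof.
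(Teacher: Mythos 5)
First, note that the paper does not prove Theorem~\ref{Tonelli}: it is quoted as a classical result with a pointer to Saks (Chapter V, pp.~163--185), so there is no in-paper argument to compare yours against. Your outline follows the modern route --- Cauchy's projection inequality to pass from $A_E(f_n)$ to the integrated slice variations, then lower semicontinuity of the area functional on $BV$ in the sense of Goffman--Serrin/Reshetnyak --- rather than Saks's more elementary measure-theoretic treatment. The architecture of parts (a) and (b) is sound, modulo two technical points you gloss over: (i) in the converse of part (a), bounding $A_E(f_n)$ for a grid interpolant produces Riemann sums such as $k\sum_i W_2(y_i)$ of the merely integrable functions $W_1,W_2$, which need not converge to their integrals; one must either average over grid offsets to select good grid lines or mollify first and then interpolate; (ii) before Serrin's theorem applies in part (b) you need that continuity plus bounded Tonelli variation implies $f\in BV(\Omega)$ in the distributional sense --- true, but itself a slicing statement that deserves a line.

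The genuine gap is the one you name yourself: equality in \eqref{tonelli} implies Tonelli absolute continuity. You should recognize that the ``quantitatively sharp refinement'' you ask for is not a new obstacle but is exactly the theorem you already invoked in part (b), applied in its full relaxed form: for convex integrands of linear growth the lower-semicontinuous envelope is $\int_\Omega\sqrt{1+|\nabla f|^2}\,\dd\bfx+|D^s f|(\Omega)$, the singular part entering through the recession function of $\sqrt{1+|p|^2}$, which is $|p|$. Hence $A_L(f)\ge\int_\Omega\sqrt{1+|\nabla f|^2}\,\dd\bfx+|D^s f|(\Omega)$, and equality forces $|D^s f|(\Omega)=0$. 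What then still needs an argument is the passage from ``$Df$ is an absolutely continuous measure'' to ``almost every horizontal and vertical slice of the continuous function $f$ is absolutely continuous'': this is the one-dimensional section theorem for $BV$ functions (the variation measure of a.e.\ slice is the slice of the variation measure, and a continuous one-dimensional $BV$ function whose derivative measure has no singular part is absolutely continuous). Without that slicing step, and without the explicit recovery-sequence construction in the forward direction of part (c) (mollify, then interpolate on triangulations fine enough that $\nabla f_n\to\nabla f$ in $L^1$), what you have is a correct road map rather than a proof; but none of the missing pieces points in a wrong direction.
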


For a proof of this theorem, see \cite[Chapter V, pp.163--185]{Saks}.
It follows from Tonelli's theorem that if $f \in W^{1,\infty}(\Omega)$,
then the area $A_L(f)$ is finite and the equality holds in
\eqref{tonelli}. 

\subsection{Affine linear transformation of triangles}
Let $\hK$ be the reference triangle with vertices 
$\hat\bfx_1 = (0,0)^\top$, $\hat\bfx_2 = (1,0)^\top$,  and 
$\hat\bfx_3 = (0,1)^\top$.  For $\alpha$, $0 < \alpha \le 1$,
let $\Ka$ and $\tK$ be the triangles with vertices $(0,0)^\top$, $(1,0)^\top$,
$(0,\alpha)^\top$, and  $\bfx_1 = (0,0)^\top$, $\bfx_2 = (1,0)^\top$,
$\bfx_3 = (\alpha s,\alpha t)^\top$, respectively, where $s^2 + t^2 = 1$,
$t > 0$. Without loss of
generality, we may assume that $e_1$ is the longest edge of $\tK$.
Let $\theta$ be the angle between $e_2$ and $e_3$ in $\tK$.
Then, $s = \cos\theta$, $t = \sin\theta$, and the assumption
that $e_1$ is the longest yields
\begin{equation*}
   s= \cos\theta \le \frac{\alpha}{2} \le \frac{1}{2}, \qquad
    \frac{\pi}{3} \le \theta < \pi.
%  \label{conditionK}
\end{equation*}
Note that an arbitrary triangle in $\R^2$ can be transformed to $\tK$ by a
sequence of scaling, translation, rotation, and mirror imaging.

\begin{figure}[thb]
\begin{center}
  \psfrag{a}[][]{$e_3$}
  \psfrag{b}[][]{$e_2$}
  \psfrag{c}[][]{$e_1$}
  \psfrag{t}[][]{$\theta$}
  \psfrag{x1}[][]{$\bfx_1$}
  \psfrag{x2}[][]{$\bfx_2$}
  \psfrag{x3}[][]{$\bfx_3$}
  \psfrag{K}[][]{$\tK$}
  \includegraphics[width=6cm]{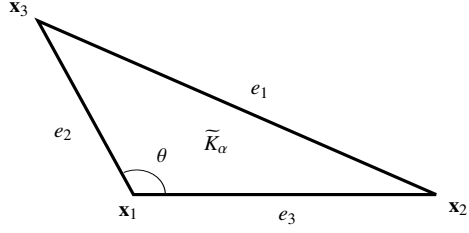}
 \caption{The triangle under consideration.
 The vertices are
 $\bfx_1=(0,0)^\top$, $\bfx_2=(1,0)^\top$, and
 $\bfx_3=(\alpha s,\alpha t)^\top$,
 where $s^2 + t^2 = 1$, $t > 0$, and $0 < \alpha \le 1$.
 We assume that $\alpha = |e_2| \le |e_3| = 1 \le |e_1|$.}
 \label{LiuKikuchiTriangle}
\end{center}
\end{figure}

We define the $2 \times 2$ matrices as 
\begin{align}
  A := \begin{pmatrix}
	   1 &  s \\ 0 &  t 
	 \end{pmatrix}, \qquad
    B := A^{-1} = \begin{pmatrix}
	   1 & - st^{-1} \\ 0 & t^{-1}
	 \end{pmatrix}.
  \label{matrixA}
\end{align}
Then, $\Ka$ can be transformed to $\tK$ by the transformation
$\bfy = A\bfx$.  Moreover, a function $v(\bfy) \in W^{1,p}(\tK)$ is 
pulled-back to the function $\hv(\bfx) \in W^{1,p}(\Ka)$ as
$\hv(\bfx) := v(A\bfx) = v(\bfy)$.  Then, we have
$\nabla_\bfx \hv = (\nabla_\bfy v) A$,
$\nabla_\bfy v = (\nabla_\bfx \hv) B$, and
$|\nabla_\bfy v|^2 = |(\nabla_\bfx \hv) B|^2
= (\nabla_\bfx \hv) BB^{\top} (\nabla_\bfx \hv)^\top$.
A simple computation yields that $A^\top A$ has eigenvalues $1 \pm |s|$, and
$BB^{\top}$ has eigenvalues
$1/(1 \pm |s|) = (1\mp|s|)/t^2$.  Hence, we have
\begin{gather}
 \frac{1-|s|}{t^2} |\nabla_\bfx \hv|^2 \le |\nabla_\bfy v|^2
   \le \frac{1+|s|}{t^2} |\nabla_\bfx \hv|^2.
 \label{eigen-est}
\end{gather}

Note that, for $N$ positive real numbers $U_1, ..., U_N$, the following
inequalities hold:
\begin{gather}
   \sum_{k=1}^N U_k^p \le N^{\tau(p)}
   \left(\sum_{k=1}^N U_k^2\right)^{p/2}, \quad
   \tau(p) := \begin{cases}
         1-p/2, & 1 \le p \le 2 \\
         0,     & 2 \le p < \infty
     \end{cases},  \label{tau}\\
    \left(\sum_{k=1}^N U_k^2\right)^{p/2} \le N^{\gamma(p)}
   \sum_{k=1}^N U_k^p, \quad
   \gamma(p) := \begin{cases}
         0, &  1 \le p \le 2 \\
         p/2 - 1,     & 2 \le p < \infty
     \end{cases}.
    \label{gamma}
\end{gather}
Combining \eqref{eigen-est} with \eqref{tau}, \eqref{gamma}, and
noting that the determinant of $A$ is $t$, we have, for $1 \le p < \infty$,
{\allowdisplaybreaks
\begin{align*}
  |v|_{1,p,\tK}^p & = \int_{\tK} \sum_{|\delta| = 1}
   |\partial_\bfy^\delta v(\bfy)|^p \dd \bfy 
   \ge 2^{-\gamma(p)} \int_{\tK} \left(
       |\nabla_\bfy v(\bfy)|^2 \right)^{p/2} \dd \bfy \\
   & \ge 2^{-\gamma(p)} \left(\frac{1-|s|}{t^2}\right)^{p/2}
      \int_{\tK} \left(|\nabla_\bfx\hv(\bfx)|^2 \right)^{p/2} \dd \bfy \\
   & = 2^{-\gamma(p)}  \left(\frac{1-|s|}{t^2}\right)^{p/2}t
      \int_{K_\alpha} \left(|\nabla_\bfx\hv(\bfx)|^2 \right)^{p/2}
     \dd \bfx \\
   & \ge 2^{-(\tau(p)+\gamma(p))} 
   \left(\frac{1-|s|}{t^2}\right)^{p/2}t
      \int_{K_\alpha} 
      \sum_{|\delta|=1} |\partial_\bfx^\delta \hv(\bfx)|^p  \dd \bfx \\
    & = 2^{-(\tau(p)+\gamma(p))} 
   \left(\frac{1-|s|}{t^2}\right)^{p/2}t |\hv|_{1,p,K_\alpha}^p,
\end{align*}
}
and similarly, 
\begin{align*}
  |v|_{1,p,\tK}^p \le2^{\tau(p)+\gamma(p)} 
   \left(\frac{1+|s|}{t^2}\right)^{p/2}t |\hv|_{1,p,K_\alpha}^p.
\end{align*}

Let $K$ be an arbitrary triangle and $K_1$ be the right triangle
obtained by a composition of parallel translation, mirror
imaging, and $A^{-1}$.  As before, any $v \in W^{1,p}(K)$ may be
pulled-back to the function $\hv := v\circ \rho \in W^{1,p}(K_1)$.
Then, in exactly the same manner, we obtain
\begin{align*}
  2^{-\eta(p)}\frac{(1-|s|)^{1/2}}{t^{1-1/p}} |\hv|_{1,p,K_1} \le
  |v|_{1,p,K} \le 2^{\eta(p)} \frac{(1+|s|)^{1/2}}{t^{1-1/p}}
   |\hv|_{1,p,K_1},
  %\label{transP}
\end{align*}
where
$\eta(p):= 1/p - 1/2$ for $1 \le p \le 2$ and
$\eta(p):= 1/2- 1/p$ for $2 \le p < \infty$.
By letting $p \to \infty$, we also obtain
\begin{align}
  \frac{(1-|s|)^{1/2}}{\sqrt{2}\, t} |\hv|_{1,\infty,K_1} \le
  |v|_{1,\infty,K} \le \frac{\sqrt{2} (1+|s|)^{1/2}}{t}
   |\hv|_{1,\infty,K_1}.
  \label{transinf}
\end{align}

\section{Approximating the surface area by Lagrange interpolation}
\label{sec:Lagrange}
Let $K_1$ be a right triangle whose vertices are 
$\hat\bfx_1 := (0,0)^\top$, $\hat\bfx_2 := (h_1,0)^\top$, and
$\hat\bfx_3 := (0,h_2)^\top$, where $0 < h_2 \le h_1$.  Let $K$ be the
triangle whose vertices are defined by $\bfx_i:= A\hat\bfx_i$,
$i=1,2,3$, where the matrix $A$ is defined by \eqref{matrixA}.
Without loss of generality, we may assume that the angle at
the vertex $A\bfx_1$ is the maximum angle of $K$.  Note that
an arbitrary triangle is obtained from $K$ by a combination of rotation,
translation, and mirror imaging.

As before, an arbitrary function $\hv \in W^{1,\infty}(K_1)$ is
pulled-back to $v(\bfx) := \hv(A^{-1}\bfx)$.  Then, their Lagrange
interpolations $\I_{K_1}^L \hv$ and $\I_{K}^L v$ are defined as
\begin{align*}
   (\I_{K_1}^L \hv)(\widehat{X},\widehat{Y}) & = \widehat{P}
   \widehat{X} + \widehat{R} \widehat{Y} + \widehat{R}, \quad
  \widehat{P} := \frac{\hv(\hat\bfx_2)-\hv(\hat\bfx_1)}
                  {|\hat\bfx_2 - \hat\bfx_1|},    \;
  \widehat{Q} := \frac{\hv(\hat\bfx_3)-\hv(\hat\bfx_1)}
      {|\hat\bfx_3 - \hat\bfx_1|}, \\
   (\I_{K}^L v)(X,Y) & = P X + Q Y + R, \quad
  P = \widehat{P}, \quad 
  Q = -\frac{s}{t}\widehat{P} + \frac{1}{t}\widehat{Q}.
\end{align*}
Therefore, we see that
\begin{align*}
   |\I_{K_1}^L \hv|_{1,\infty,K_1} \le |\hv|_{1,\infty,K_1}, \qquad
   |\I_{K}^L v|_{1,\infty,K} \le \frac{1 + |s|}{t}
   |\I_{K_1}^L \hv|_{1,\infty,K_1}.
\end{align*}
Combining these inequalities with \eqref{transinf}, we have
\begin{align*}
   |\I_{K}^L v|_{1,\infty,K}
   \le \frac{4}{\sin\theta_K} |v|_{1,\infty,K},
\end{align*}
where $\theta_K$ is the maximum angle of $K$.
Note that, in general, the Sobolev norm is affected by a rotation.
Hence, we have shown the following lemma.

\begin{lemma} \label{lem3.1}
Let $\Omega \subset \R^2$ be a bounded polygonal domain and
$\tau$ be a triangulation of $\Omega$.  Suppose that $\tau$
satisfies the maximum angle condition, that is, there exists 
$\theta_2$, $\pi/3 \le \theta_2 < \pi$, such that
$\theta_K \le \theta_2$ for any $K \in \tau$.  Then, 
there exists a constant $C_1$ depending only on $\theta_1$ such that 
\[
   \|\I_\tau^L
  \|_{\mathcal{L}(W^{1,\infty}(\Omega),W^{1,\infty}(\Omega))}
  \le C_1,
\]
where
$\|\I_\tau^L  \|_{\mathcal{L}(W^{1,\infty}(\Omega),W^{1,\infty}(\Omega))}$
is the operator norm of
$\I_\tau^L :W^{1,\infty}(\Omega) \to W^{1,\infty}(\Omega)$.
\end{lemma}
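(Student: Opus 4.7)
The plan is to piece together the lemma from the pointwise bound $|\I_K^L v|_{1,\infty,K}\le (4/\sin\theta_K)|v|_{1,\infty,K}$ that has just been established per triangle, plus a trivial $L^\infty$ estimate, and then to convert the per-triangle bound into a global estimate via the maximum angle condition. The heavy lifting (the pullback, the eigenvalue analysis of $BB^\top$, and the explicit comparison of $|\I_{K}^L v|_{1,\infty,K}$ with $|\I_{K_1}^L\hv|_{1,\infty,K_1}$) is already completed in the excerpt, so Lemma~\ref{lem3.1} is essentially a bookkeeping step.

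First I would dispense with the $0$-th order part: since $\I_K^L v$ is determined by the values of $v$ at the three vertices of $K$ and is a convex combination on $K$, one has $\|\I_K^L v\|_{0,\infty,K}\le \|v\|_{0,\infty,\Omega}$ for every $K\in\tau$, hence $\|\I_\tau^L v\|_{0,\infty,\Omega}\le\|v\|_{0,\infty,\Omega}$. Next, for the seminorm, I would invoke the inequality derived just above the lemma, namely $|\I_K^L v|_{1,\infty,K}\le (4/\sin\theta_K)|v|_{1,\infty,K}$, applied on every $K\in\tau$. Because $\I_\tau^L v$ is piecewise affine and its gradient is (almost everywhere) the gradient of $\I_K^L v$ on the interior of each $K$, one obtains
\[
 |\I_\tau^L v|_{1,\infty,\Omega}
 = \max_{K\in\tau}|\I_K^L v|_{1,\infty,K}
 \le \max_{K\in\tau}\frac{4}{\sin\theta_K}\,|v|_{1,\infty,\Omega}.
\]

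It remains to bound $1/\sin\theta_K$ uniformly under the maximum angle condition. Since the three interior angles of any triangle sum to $\pi$, the maximum angle $\theta_K$ always satisfies $\theta_K\ge \pi/3$; under the hypothesis it also satisfies $\theta_K\le \theta_2<\pi$. Hence $\sin\theta_K\ge \min\{\sin(\pi/3),\sin\theta_2\}>0$, a quantity depending only on $\theta_2$. Combining this with the $L^\infty$ estimate yields
\[
 \|\I_\tau^L v\|_{1,\infty,\Omega}
 \le \max\!\left\{1,\frac{4}{\min\{\sin(\pi/3),\sin\theta_2\}}\right\}
      \|v\|_{1,\infty,\Omega},
\]
and setting $C_1$ equal to the constant in front gives the desired bound on the operator norm.

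There is no real obstacle here; the only subtle point is making sure the pullback/orientation argument used to arrive at $|\I_K^L v|_{1,\infty,K}\le (4/\sin\theta_K)|v|_{1,\infty,K}$ treats every triangle (after a possible relabelling of vertices so that the maximum angle is at $\bfx_1$) with a rotation/mirror that preserves the $W^{1,\infty}$ seminorm. Since such rigid motions leave both $|\cdot|_{1,\infty}$ and the set of triangle angles invariant, this causes no loss of constants, and the constant $C_1$ indeed depends only on $\theta_2$.
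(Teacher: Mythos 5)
Your proposal is correct and follows essentially the same route as the paper: the paper derives the per-triangle bound $|\I_K^L v|_{1,\infty,K}\le (4/\sin\theta_K)\,|v|_{1,\infty,K}$ immediately before the lemma and treats the global statement as a direct consequence, exactly the bookkeeping you carry out. Your explicit handling of the zeroth-order term via the convex-combination property and the uniform lower bound $\sin\theta_K\ge\min\{\sin(\pi/3),\sin\theta_2\}$ (noting that the constant depends on $\theta_2$, the symbol $\theta_1$ in the statement being a typo) fills in precisely the details the paper leaves implicit.
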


Lemma~\ref{lem3.1} provides an alternate proof of the following
classical result.

\begin{theorem}[Young \cite{Young}] \label{Young}
 Let $\Omega \subset \R^2$ be a bounded domain and
$\{\tau_k\}_{k=1}^\infty$ be a sequence of triangulations
of $\Omega$ with $\lim_{k\to\infty}|\tau_k| = 0$
that satisfies the maximum angle condition.
That is, there exists $\theta_2$, $\pi/3 \le \theta_2 < \pi$, such that
$\theta_K \le \theta_2$ for any $K \in \tau_k$, $k=1,2,\cdots$.
Then, for any $f \in W^{1,\infty}(\Omega)$, we have
\begin{equation}
   \lim_{k\to\infty}A_E(\I_{\tau_k}^L f) = A_L(f).
    \label{limitLagrange}
\end{equation}
\end{theorem}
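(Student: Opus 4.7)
The plan is to reduce the convergence to smooth data by a density argument for which the uniform stability bound of Lemma~\ref{lem3.1} (suitably generalised from $p=\infty$ to finite $p$) is the key ingredient. Because Tonelli's theorem (Theorem~\ref{Tonelli}) gives $A_L(f) = \int_\Omega \sqrt{1+|\nabla f|^2}\,\dd\bfx$ for $f \in W^{1,\infty}(\Omega)$ and $a \mapsto \sqrt{1+|a|^2}$ is $1$-Lipschitz,
\[
   |A_E(\I_{\tau_k}^L f) - A_L(f)|
   \le \int_\Omega |\nabla \I_{\tau_k}^L f - \nabla f|\,\dd\bfx,
\]
so it suffices to show that $\nabla \I_{\tau_k}^L f \to \nabla f$ in $L^1(\Omega)$.

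For smooth $g \in C^2(\overline{\Omega})$, I would combine a Bramble--Hilbert-type bound on the right reference triangle $K_1$ with the transformation inequality \eqref{transinf} to obtain the classical per-element estimate $\|\nabla(g - \I_K^L g)\|_{L^\infty(K)} \le C(\theta_2)\,h_K\,|g|_{2,\infty,K}$ under the maximum angle condition; hence $\nabla \I_{\tau_k}^L g \to \nabla g$ uniformly on $\Omega$, and in particular in $L^1(\Omega)$.

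For general $f \in W^{1,\infty}(\Omega)$, I would take a mollified sequence $g_j \in C^\infty(\overline{\Omega})$ with $g_j \to f$ in $W^{1,p}(\Omega)$ for some fixed $p > 2$ (with $\|g_j\|_{1,\infty,\Omega}$ bounded uniformly in $j$) and decompose
\[
   \|\nabla(\I_{\tau_k}^L f - f)\|_{L^1}
   \le \|\nabla \I_{\tau_k}^L(f - g_j)\|_{L^1}
     + \|\nabla(\I_{\tau_k}^L g_j - g_j)\|_{L^1}
     + \|\nabla(g_j - f)\|_{L^1}.
\]
The middle term vanishes as $k \to \infty$ by the smooth case, and the third term tends to zero as $j \to \infty$ by construction.

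The hard part is controlling the first term uniformly in $k$: Lemma~\ref{lem3.1} as stated gives only $W^{1,\infty}$ stability, which when paired with $|\Omega|$ would require $\|f-g_j\|_{1,\infty,\Omega}\to 0$, and this fails because $C^\infty$ is not norm-dense in $W^{1,\infty}$. I would therefore promote Lemma~\ref{lem3.1} to a $W^{1,p}$ stability bound
\[
   |\I_\tau^L v|_{1,p,\Omega}
   \le C(\theta_2,p)\,|v|_{1,p,\Omega}, \qquad p > 2,
\]
by repeating its proof verbatim, now invoking the $W^{1,p}$ version of the transformation inequalities already derived in Section~2 for the full range $1 \le p < \infty$ (Lemma~\ref{lem3.1} is the limiting case $p \to \infty$), together with the right-triangle estimate $|\I_{K_1}^L \hv|_{1,p,K_1} \le C\,|\hv|_{1,p,K_1}$, which follows from the closed-form expressions for $\widehat{P}, \widehat{Q}$ and Morrey's inequality $W^{1,p}(K_1)\hookrightarrow C^{0,1-2/p}(\overline{K_1})$. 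Hölder's inequality then yields $\|\nabla \I_{\tau_k}^L(f-g_j)\|_{L^1} \le C\,|\Omega|^{1-1/p}\,\|f - g_j\|_{1,p,\Omega}$ uniformly in $k$, and sending first $k \to \infty$ and then $j \to \infty$ completes the proof of \eqref{limitLagrange}.
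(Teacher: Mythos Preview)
Your reduction of \eqref{limitLagrange} to $L^1$-convergence of gradients and the three-term splitting are exactly the paper's. The paper, however, does precisely what you say cannot be done: it asserts one may choose $f_\varepsilon\in W^{2,\infty}(\Omega)$ with $|f-f_\varepsilon|_{1,\infty,\Omega}<\varepsilon$, applies Lemma~\ref{lem3.1} in its stated $p=\infty$ form to bound $|\I_{\tau_k}^L(f_\varepsilon-f)|_{1,\infty,\Omega}\le C_1\varepsilon$, and handles the middle term by the circumradius estimate $|f_\varepsilon-\I_{\tau_k}^Lf_\varepsilon|_{1,\infty,\Omega}\le C_2 R_k|f_\varepsilon|_{2,\infty,\Omega}$. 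So your density objection is aimed squarely at the paper's own argument.

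Your proposed remedy, though, has a genuine gap: the uniform $W^{1,p}$ stability of $\I_K^L$ under the maximum angle condition alone is \emph{false} for every finite $p>2$. On the right triangle $K_\alpha$ with vertices $(0,0)$, $(1,0)$, $(0,\alpha)$ (maximum angle $\pi/2$ for every $\alpha\in(0,1]$), let $v$ be a smooth radial bump of width $\epsilon=\alpha/10$ centred at $(0,\alpha)$, with $v(0,\alpha)=1$ and $v$ vanishing at the other two vertices. Then $\I_{K_\alpha}^L v = y/\alpha$, so $|\I_{K_\alpha}^L v|_{1,p,K_\alpha}^p=\tfrac12\alpha^{1-p}$, while $|\nabla v|\sim 1/\epsilon$ is supported on a sector of area $\sim\epsilon^2$ (the vertex angle at $(0,\alpha)$ is close to $\pi/2$), giving $|v|_{1,p,K_\alpha}^p\sim\epsilon^{2-p}\sim\alpha^{2-p}$; the ratio is $\sim\alpha^{-1}\to\infty$. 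The point is that the Morrey constant on $K_1$ you invoke is not shape-independent: it blows up with the aspect ratio, which the maximum angle condition leaves unconstrained. Only in the limit $p=\infty$ does the stability survive degeneration, which is why the paper can rely on Lemma~\ref{lem3.1} as stated --- modulo the $W^{1,\infty}$-density step that you rightly flag.
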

\begin{proof} First, we note that, for $f$, $g \in W^{1,1}(K)$,
\begin{align}
   \left|\int_K \sqrt{1 + |\nabla f|^2}\, \dd \bfx
    - \int_K \sqrt{1 + |\nabla g|^2}\, \dd \bfx\right|
   \le |f - g|_{1,1,K},
   \label{ineq1}
\end{align}
because
\begin{align*}
   \frac{|\partial^\gamma f + \partial^\gamma g|}
    {\sqrt{1 + |\nabla f|^2} + \sqrt{1 + |\nabla g|^2}} \le 1,
   \quad \gamma = (1,0), \; (0,1).
\end{align*}
Let $\varepsilon > 0$ be arbitrarily taken and fixed.
We may take $f_\varepsilon \in W^{2,\infty}(\Omega)$ such that
$|f - f_\varepsilon|_{1,\infty,\Omega} < \varepsilon$.
Recall that we have the estimation
\[
   |f_\varepsilon - \I_{\tau_k}^L f_\varepsilon|_{1,\infty,\Omega}
   \le C_2 R_k |f_\varepsilon|_{2,\infty,\Omega},
\]
where $R_k := \max_{K \in \tau_k} R_K$ and $C_2$ is a constant
that is independent of $\tau_k$ and $f_\varepsilon$
\cite{KobayashiTsuchiya1}, \cite{KobayashiTsuchiya2},
\cite{KobayashiTsuchiya3}.
If the sequence of triangulations $\{\tau_k\}$ satisfies
the maximum angle condition, then it satisfies the circumradius
condition. Hence, we have $\lim_{k\to\infty} R_k = 0$.

There exists an integer $N$ such that, for any integer $k \ge N$, we have
$C_2 R_k |f_\varepsilon|_{2,\infty,\Omega} < \varepsilon$.
Let $|\Omega|$ be the area of $\Omega$.  It follows from 
Lemma~\ref{lem3.1} and \eqref{ineq1} that, for $k \ge N$,
\begin{align*}
 |A_L(f) - & A_E(\I_{\tau_k}^Lf)| \le 
 |f - \I_{\tau_k}^{L} f|_{1,1,\Omega} \\
 & \le |f - f_\varepsilon|_{1,1,\Omega} 
 + |f_\varepsilon - \I_{\tau_k}^{L} f_\varepsilon|_{1,1,\Omega}
 + |\I_{\tau_k}^{L} (f_\varepsilon - f)|_{1,1,\Omega} \\
 & \le |\Omega|\left(|f - f_\varepsilon|_{1,\infty,\Omega} 
 + |f_\varepsilon - \I_{\tau_k}^{L} f_\varepsilon|_{1,\infty,\Omega}
 + |\I_{\tau_k}^{L} (f_\varepsilon - f)|_{1,\infty,\Omega}\right) \\
 & \le |\Omega|\left(\varepsilon 
   + C_2 R_k |f_\varepsilon|_{2,\infty,\Omega} +
  |\I_{\tau_k}^{L} (f_\varepsilon - f)|_{1,\infty,\Omega}\right) \\
 & \le |\Omega| \left(2\varepsilon + 
  \|\I_{\tau_k}^{L}\|_{\mathcal{L}(W^{1,\infty}(\Omega),W^{1,\infty}(\Omega))} 
    |f - f_\varepsilon|_{1,\infty,\Omega} \right) \\
  & < |\Omega|(2 + C_1) \varepsilon.
\end{align*}
Because $\varepsilon$ is arbitrary, these inequalities indicate
that \eqref{limitLagrange} holds.
\hfill $\square$
\end{proof}

\noindent
\textit{Remark:}
Here, we describe Young's original proof of Theorem~\ref{Young}
concisely.  Let $R := (a,c) \times (b,d)$ be a rectangle.  Let $x(u,v)$
and $y(u,v)$ be sufficiently smooth functions defined on $(u,v) \in R$,
and $B(u,v)$ be defined by
\[
 B(u,v) := \frac{\partial x}{\partial u}
  \frac{\partial y}{\partial v} - \frac{\partial x}{\partial v}
  \frac{\partial y}{\partial u}.
\]
The rectangle $R$ is divided into small rectangles with segments
that are parallel to $u$- and $v$-axes.  As a result, $R$ is divided
into small (possibly very thin) sub-rectangles.
Furthermore, each sub-rectangle is divided into two
semi-rectangles (triangles) by means of the diagonal, sloping down from
left to right.

 Let $h$, $k$ be
sufficiently small reals such that $hk > 0$, and $(u,v)^\top$,
$(u+h,v)^\top$, $(u+h,v+k)^\top$, $(u,v+k)^\top$ be the corner points of
a sub-rectangle.  Define
\begin{align*}
   |D_n| := \frac{1}{2}\bigl|(x(u+h,v) & - x(u,v))(y(u,v+k) - y(u,v)) \\
         & - (y(u+h,v) - y(u,v))(x(u,v+k) - x(u,v))\bigr|
\end{align*}
for one triangle, and also a similar expression for the other triangle.
Young considered $\sum_{n} |D_n|$, where the summation is taken for all
such triangles. He proved that 
\[
  \lim_{\bar{h}, \bar{k} \to 0} \sum_n |D_n| =
    \int_a^b \!\! \int_c^d |B(u,v)| du dv, \quad
    \bar{h} := \max h, \quad \bar{k} := \max k
\]
by rather measure theoretic manner (considering Stieltjes integrals).
The conclusion was immediately extended to the case of surface areas.
Then, he ``skewed'' triangles in sub-rectangles so that one of
angles of every triangle in the $(u,v)$-plane lies between
$0 < \gamma$ and $\pi - \gamma$, and he finally claimed that
Theorem~\ref{Young} is valid.

Therefore, the strategy of his proof was ``compress right triangles
perpendicularly and skew them'', and is similar to ours.

\section{Error analysis of Crouzeix--Raviart interpolation}
\label{sec:CrouzeixRviart}
Let $\gamma \in \N_0^2$ be a multi-index with $|\gamma|=1$.
The sets $\Xi_p^\gamma \subset W^{1,p}(\hK)$, $1 \le p \le \infty$,
are defined by
\begin{align*}
  \Xi_p^{(1,0)} & := \left\{ v \in W^{1,p}(\hK) \Bigm| 
    \int_0^1 v(s,0) \dd s = 0 \right\}, \\
  \Xi_p^{(0,1)} & := \left\{ v \in W^{1,p}(\hK) \Bigm| 
    \int_0^1 v(0,s) \dd s = 0 \right\}.
\end{align*}
Similarly, for an arbitrary triangle $K \subset \R^2$, 
$\E_p(K)$, $\Phi_p(K) \subset W^{1,p}(K)$ are defined by
\begin{align*}
 \Phi_p(K) & := \left\{ v \in W^{1,p}(K) \Bigm| 
    \int_K v \, \dd \bfx = 0 \right\}, \\
  \E_p(K) & := \left\{ v \in W^{1,p}(K) \Bigm| 
    \int_{e_i} v\, \dd s = 0, \; i=1,2,3 \right\}.
\end{align*}
From the definition, it is clear that $\E_p(\hK) \subset \Xi_P^\gamma$,
$|\gamma|=1$.
Then, the constant $A_p$ and $B_p(K)$ are defined for
$p \in [1,\infty]$ by
\begin{gather*}
   A_p := \sup_{v \in \Xi_p^{(1,0)}} \frac{|v|_{0,p,\hK}}
  {|v|_{1,p,\hK}} = \sup_{v \in \Xi_p^{(0,1)}}
   \frac{|v|_{0,p,\hK}}{|v|_{1,p,\hK}}, \\
  B_p(K) := \sup_{v \in \Phi_p(K)} \frac{|v|_{0,p,K}}{|v|_{1,p,K}}, \qquad
  C_p(K) := \sup_{v \in \E_p(K)} \frac{|v|_{0,p,K}}{|v|_{1,p,K}}.
\end{gather*}
The constant $A_p$ is called the
\textbf{Babu\v{s}ka--Aziz constant} for $p$, $1 \le p \le \infty$.
According to Liu--Kikuchi  \cite{LiuKikuchi}, $A_2$ is the maximum
positive solution of the equation $1/x + \tan(1/x) = 0$, and
$A_2 \approx 0.49291$.  Babu\v{s}ka--Aziz \cite{BabuskaAziz} and
Kobayashi--Tsuchiya \cite{KobayashiTsuchiya1} showed the following lemma.

\begin{lemma}\label{L3.1} 
We have $A_p < \infty$, $1 \le p \le \infty$.
\end{lemma}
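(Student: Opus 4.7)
The plan is a standard Poincar\'e--Wirtinger-type compactness argument on the reference triangle $\hK$. By the symmetry $(x,y)\mapsto(y,x)$ of $\hK$ the two suprema defining $A_p$ coincide, so it suffices to bound
$\sup_{v\in\Xi_p^{(1,0)}}|v|_{0,p,\hK}/|v|_{1,p,\hK}$.

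First I would argue by contradiction: suppose this supremum is infinite. Then I pick a sequence $\{v_n\}\subset\Xi_p^{(1,0)}$ normalized so that $|v_n|_{0,p,\hK}=1$ while $|v_n|_{1,p,\hK}\to 0$; in particular $\{v_n\}$ is bounded in $W^{1,p}(\hK)$.

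Next I extract a compactness limit. For $1\le p<\infty$, the Rellich--Kondrachov theorem on the Lipschitz domain $\hK$ yields a subsequence (not relabeled) that converges strongly in $L^p(\hK)$ to some $v$. Because $|v_n|_{1,p,\hK}\to 0$, the convergence is in fact strong in $W^{1,p}(\hK)$, so the weak gradient of the limit vanishes and, since $\hK$ is connected, $v$ is a constant $c$. For $p=\infty$, weak compactness in $W^{1,p}$ is unavailable, but boundedness in $W^{1,\infty}(\hK)$ gives a uniformly Lipschitz family whose Lipschitz constants tend to zero; Arzel\`a--Ascoli then produces a uniformly convergent subsequence whose limit is again a constant $c$.

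Finally I would pass the edge-average constraint to the limit. Continuity of the trace $\gamma_{e_3}:W^{1,p}(\hK)\to L^p(e_3)$ for $p<\infty$, and uniform convergence for $p=\infty$, give
$\displaystyle 0=\int_0^1 v_n(s,0)\,\dd s\longrightarrow\int_0^1 c\,\dd s=c,$
so $c=0$. This contradicts $|v|_{0,p,\hK}=\lim|v_n|_{0,p,\hK}=1$, proving $A_p<\infty$.

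The only genuinely delicate step is the $p=\infty$ case, where the usual weak compactness of $W^{1,p}$ does not apply; the Arzel\`a--Ascoli substitute works precisely because $|v_n|_{1,\infty,\hK}\to 0$ gives equi-Lipschitz functions with vanishing Lipschitz constant. All other ingredients (Rellich compactness, connectedness of $\hK$, continuity of the trace onto the horizontal edge) are standard on a Lipschitz domain.
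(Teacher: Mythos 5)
Your proof is correct. Note that the paper does not actually prove Lemma~\ref{L3.1}; it simply cites Babu\v{s}ka--Aziz and Kobayashi--Tsuchiya. Your contradiction-plus-compactness argument is essentially the classical Babu\v{s}ka--Aziz proof, and it also mirrors the template the paper \emph{does} use for the companion Lemma~\ref{L3.2}: there the authors take a normalized sequence with vanishing seminorm, but instead of invoking Rellich--Kondrachov directly they route through the Deny--Lions quotient estimate $\inf_{q\in\R}\|v+q\|_{1,p,\hK}\le C|v|_{1,p,\hK}$ quoted from Ciarlet, which packages the compactness and the ``gradient zero implies constant'' step into one cited inequality; the constraint (mean zero over $\hK$ there, edge average zero here) then kills the constant. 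Your version is more self-contained but needs the separate treatment of $p=\infty$, which you handle correctly via Arzel\`a--Ascoli with equi-Lipschitz constants tending to zero; the Deny--Lions route used in Lemma~\ref{L3.2} would have covered $p=\infty$ uniformly, with the constant identified through the continuity of the trace $\gamma:W^{1,1}(\hK)\to L^1(e_i)$ exactly as in your last step. Either way the argument is sound.
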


Similarly, the following lemma holds.

\begin{lemma}\label{L3.2}
We have $B_p(\hK) < \infty$, $1 \le p \le \infty$.
\end{lemma}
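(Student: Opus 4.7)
The plan is to prove $B_p(\widehat{K}) < \infty$ by the standard Poincar\'e--Wirtinger compactness argument for $1 \le p < \infty$, and by a direct Lipschitz-type argument for $p = \infty$.

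For $1 \le p < \infty$, I would proceed by contradiction: suppose $B_p(\widehat{K}) = \infty$. Then there exists a sequence $\{v_n\} \subset \Phi_p(\widehat{K})$ with $|v_n|_{0,p,\widehat{K}} = 1$ and $|v_n|_{1,p,\widehat{K}} \to 0$. Since $\widehat{K}$ is a bounded Lipschitz domain, the Rellich--Kondrachov theorem guarantees that $W^{1,p}(\widehat{K})$ embeds compactly into $L^p(\widehat{K})$. The sequence $\{v_n\}$ is bounded in $W^{1,p}(\widehat{K})$, so after passing to a subsequence $v_n \to v$ in $L^p(\widehat{K})$, and the convergence $|v_n|_{1,p,\widehat{K}} \to 0$ together with completeness of $W^{1,p}$ forces $v_n \to v$ in $W^{1,p}(\widehat{K})$ with $|v|_{1,p,\widehat{K}} = 0$. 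Hence $v$ is constant on $\widehat{K}$; the zero-mean condition $\int_{\widehat{K}} v \, \mathrm{d}\bfx = 0$ (which passes to the limit by $L^1$-convergence) forces $v \equiv 0$, contradicting $|v|_{0,p,\widehat{K}} = \lim_n |v_n|_{0,p,\widehat{K}} = 1$.

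For $p = \infty$, I would argue directly. Any $v \in W^{1,\infty}(\widehat{K})$ admits a continuous (indeed Lipschitz) representative on $\overline{\widehat{K}}$ with Lipschitz constant bounded by $\sqrt{2}\,|v|_{1,\infty,\widehat{K}}$, since $\widehat{K}$ is convex. If $\int_{\widehat{K}} v \, \mathrm{d}\bfx = 0$ and $v \not\equiv 0$, then $v$ attains strictly positive and strictly negative values, so by continuity on the connected set $\widehat{K}$ there exists $\bfx_0 \in \widehat{K}$ with $v(\bfx_0) = 0$. For every $\bfx \in \widehat{K}$,
\[
   |v(\bfx)| = |v(\bfx) - v(\bfx_0)| \le \sqrt{2}\,|v|_{1,\infty,\widehat{K}}\,|\bfx - \bfx_0| \le \sqrt{2}\,\mathrm{diam}(\widehat{K})\,|v|_{1,\infty,\widehat{K}},
\]
which gives $B_\infty(\widehat{K}) \le 2$.

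The main obstacle is really just the $p = 1$ endpoint of the compactness argument, since the Rellich--Kondrachov embedding $W^{1,1}(\widehat{K}) \hookrightarrow L^1(\widehat{K})$ is slightly more delicate than its $p>1$ counterpart; however, for bounded Lipschitz domains this compactness is classical, so the contradiction argument goes through verbatim. No new ideas beyond standard Sobolev machinery are required, and the proof yields finiteness of $B_p(\widehat{K})$ across the whole range $1 \le p \le \infty$.
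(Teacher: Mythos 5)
Your proof is correct, and it reaches the conclusion by a genuinely different mechanism than the paper's. The paper also argues by contradiction with a normalized sequence $\{w_k\}\subset\Phi_p(\hK)$ satisfying $|w_k|_{0,p,\hK}=1$ and $|w_k|_{1,p,\hK}\to 0$, but instead of invoking Rellich--Kondrachov directly it cites the quotient-norm estimate $\inf_{q\in\R}\|v+q\|_{1,p,\hK}\le C(\hK,p)\,|v|_{1,p,\hK}$ from \cite[Theorem~3.1.1]{Ciarlet} (essentially the Deny--Lions lemma); it then chooses near-minimizing constants $q_k$, extracts a convergent subsequence of the real numbers $q_k$ by Bolzano--Weierstrass, and uses the zero-mean constraint to force the limiting constant $\bar q$ to vanish, so that $w_{k_i}\to 0$ in $W^{1,p}(\hK)$, contradicting the normalization. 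The compactness is thus packaged inside the cited theorem rather than used explicitly, and a single argument covers the whole range $1\le p\le\infty$ at once. Your version makes the compactness explicit via Rellich--Kondrachov (correctly noting that the embedding of $W^{1,1}(\hK)$ into $L^1(\hK)$ is compact on a bounded Lipschitz domain, so the $p=1$ endpoint causes no trouble), and treats $p=\infty$ separately by the Lipschitz mean-value argument; the payoff of that split is an explicit bound $B_\infty(\hK)\le 2$, which neither the paper's proof nor a bare compactness argument provides. Both routes are standard and both are complete; yours is marginally more self-contained (no appeal to the quotient estimate), while the paper's is more uniform across $p$.
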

\begin{proof} The proof is by contradiction. Suppose that
$B_{p}(\hK) = \infty$.
Then, there exists $\{w_k\}_{k=1}^\infty \subset \Phi_{p}(\hK)$ such that
\[
   |w_k|_{0,p,\hK} = 1, \qquad
   \lim_{k \to \infty} |w_k|_{1,p,\hK} = 0.
\]
By \cite[Theorem~3.1.1]{Ciarlet}, there is a constant $C(\hK,p)$ such that
\begin{align*}
  \inf_{q \in \R} \|v + q\|_{1,p,\hK} & \le 
  C(\hK,p) |v|_{1,p,\hK}, \qquad \forall v \in W^{1,p}(\hK).
\end{align*}
Therefore, there exists $\{q_k\}\subset \R$ such that
\begin{gather*}
   \inf_{q \in \R}
  \|w_k + q\|_{1,p,\hK} \le \|w_k + q_k\|_{1,p,\hK}
   \le \inf_{q \in \R}
  \|w_k + q\|_{1,p,\hK} + \frac{1}{k}, \\
  \lim_{k \to \infty} \|w_k + q_k\|_{1,p,\hK} \le
  \lim_{k \to \infty}
  \left(C(\hK,p)|w_k|_{1,p,\hK} + \frac{1}k \right)= 0. 
\end{gather*}
As the sequence $\{w_k\}\subset W^{1,p}(\hK)$ is bounded,
$\{q_k\} \subset \R$ is also bounded.
Thus, there exists a subsequence $\{q_{k_i}\}$ such that
$q_{k_i}$ converges to $\bar{q} \in \R$.
In particular, we have
\[
   \lim_{k_i \to \infty} \|w_{k_i} + \bar{q}\|_{1,p,\hK} = 0.
\]
Hence, we have
\[
 0 = \lim_{k \to \infty} \int_{\hK} (w_{k_i} + \bar{q})\, \dd \bfx
  = \int_{\hK} \bar{q} \,\dd \bfx,
\]
because $w_{k_i} \in \Phi_{p}(\hK)$. 
Hence, we conclude that $\bar{q} = 0$ and 
$\lim_{k_i \to \infty}\|w_{k_i}\|_{1,p,\hK}$ $ = 0$, which
contradicts $\lim_{k_i \to \infty}|w_{k_i}|_{0,p,\hK}=1$. 
\hfill $\square$
\end{proof}

Let $\alpha \in (0,1]$ and $F_\alpha:\R^2 \to \R^2$ be defined by
$F_\alpha(x,y) := (x,\alpha y)^\top$, $(x,y)^\top \in \R^2$.
An arbitrary $v \in W^{1,p}(K_\alpha)$ is pulled-back to 
$\hv := v\circ F_\alpha \in W^{1,p}(\hK)$.    From the definitions,
it is clear that if $v \in \E_p(\Ka)$ or $v \in \Phi_p(\Ka)$,
then $\hv \in \E_p(\hK)$ or $\hv \in \Phi_p(\hK)$, respectively.
Because
\begin{align*}
   |v|_{0,p,K_\alpha}^p = \alpha |\hv|_{0,p,\hK}^p, \quad
   |v_x|_{0,p,K_\alpha}^p = \alpha |\hv_x|_{0,p,\hK}^p, \quad
   |v_y|_{0,p,K_\alpha}^p = \frac{1}{\alpha^{p-1}}
   |\hv_y|_{0,p,\hK}^p,
\end{align*}
we have, for $v \in W^{1,p}(\Ka)$,
\begin{align*}
   \frac{|v|_{0,p,K_\alpha}^p}{|v|_{1,p,K_\alpha}^p}
   = \frac{|\hv|_{0,p,\hK}^p}
   {|\hv_x|_{0,p,\hK}^p + \frac{1}{\alpha^{p}}|\hv_y|_{0,p,\hK}^p}
    \le \frac{|\hv|_{0,p,\hK}^p}
   {|\hv_x|_{0,p,\hK}^p + |\hv_y|_{0,p,\hK}^p}
  = \frac{|\hv|_{0,p,\hK}^p}{|\hv|_{1,p,\hK}^p}.
\end{align*}
This inequality yields
\begin{gather}
   B_p(\Ka) = \sup_{v \in \Phi_p(\Ka)} \frac{|v|_{0,p,\Ka}}{|v|_{1,p,\Ka}}
\le \sup_{\hv \in \Phi_p(\hK)}\frac{|\hv|_{0,p,\hK}^p}
  {|\hv|_{1,p,\hK}^p} = B_p(\hK) < \infty, 
   \label{B-est} \\
 C_p(\Ka) = \sup_{v \in \E_p(K_\alpha)} 
   \frac{|v|_{0,p,K_\alpha}}{|v|_{1,p,K_\alpha}}
  \le \sup_{\hv \in \E_p(\hK)} \frac{|\hv|_{0,p,\hK}}{|\hv|_{1,p,\hK}}
  \le \sup_{\hv \in \Xi_p^{(1,0)}} \frac{|\hv|_{0,p,\hK}^p}
  {|\hv|_{1,p,\hK}^p} = A_p < \infty. \label{C-est}
\end{gather}

Recall that $\tK$ defined in Section~2.4 and depicted in
Figure~\ref{LiuKikuchiTriangle} is the triangle with vertices
$(0,0)^\top$, $(0,1)^\top$, $(\alpha s, \alpha t)^\top$, where
$0 < \alpha \le 1$, $s^2 + t^2 = 1$, and $t > 0$.  Using the
inequalities in Section~2.4, we find that
\begin{align}
     \frac{|v|_{0,p,\tK}}{|v|_{1,p,\tK}} & \le
     \frac{2^{\eta(p)}t|\hv|_{0,p,\Ka}}
     {(1-|s|)^{1/2}|\hv|_{1,p,\Ka}}
  \le 2\frac{|\hv|_{0,p,\Ka}}{|\hv|_{1,p,\Ka}},
   \qquad \forall v \in W^{1,p}(\tK),
   \label{trans1}
\end{align}
because
\[
  2^{\eta(p)}(1+|s|)^{1/2}\le 
   \begin{cases}
         2^{1/p}, &  1 \le p \le 2 \\
         2^{1 - 1/p}, & 2 \le p < \infty
     \end{cases}.
\]
This estimation \eqref{trans1} with \eqref{B-est} and \eqref{C-est} yields
\begin{align*}
   B_p(\tK) & := \sup_{v \in \Phi_p(\tK)}
   \frac{|v|_{0,p,\tK}}{|v|_{1,p,\tK}} \le 2
     \sup_{\hv \in \Phi_p(\Ka)}
   \frac{|\hv|_{0,p,\Ka}}{|\hv|_{1,p,\Ka}} \le 2 B_p(\hK), \\
   C_p(\tK) & := \sup_{v \in \E_p(\tK)}
   \frac{|v|_{0,p,\tK}}{|v|_{1,p,\tK}} \le 2
     \sup_{\hv \in \E_p(\Ka)}
   \frac{|\hv|_{0,p,\Ka}}{|\hv|_{1,p,\Ka}} \le 2 A_p.
\end{align*}

The above estimations can be extended to general triangles. Now, let $K$
be an arbitrary triangle.  The similar transformation $G_\beta:\R^2 \to \R^2$
for a positive $\beta \in \R$ is defined by
$G_\beta({\bf x}) := \beta {\bf x}$.  Let $K_1$ be defined by
$K_1 = G_\beta(K)$.  A function $u \in W^{k,p}(K)$ on $K$ is pulled-back
to $v(\bfx) := u(G_\beta^{-1}(\bfx)) = u(G_{1/\beta}(\bfx))$ on
$K_1$.  Then, for a nonnegative integer $k$ and any
$p$ $(1 \le p \le \infty)$, we have
\[
  |v|_{k,p,K_1} = \beta^{2/p-k}|u|_{k,p,K}, \qquad
  \forall u \in W^{p,k}(K).
\]

Let $h_K \ge h_1 \ge h_2$ be the lengths of the three edges of $K$.
Suppose that the second longest edge of $K$ is parallel to
the $x$- or $y$-axis.  Then, by a combination of translation,
 mirror imaging, and $G_{1/h_1}$, $K$ can be transformed to the
triangle $\tK$.  Hence, we may apply the above estimations to
$\widetilde{K}$ to obtain
\begin{gather*}
    \sup_{u\in \Phi_p(K)} 
    \frac{|u|_{0,p,K}}{h_1|u|_{1,p,K}} =
    \sup_{v\in \Phi_p(\tK)} 
    \frac{|v|_{0,p,\tK}}{|v|_{1,p,\tK}} \le \, 2B_p(\hK), \\
        \sup_{u\in \E_p(K)} 
    \frac{|u|_{0,p,K}}{h_1|u|_{1,p,K}} =
    \sup_{v\in \E_p(\tK)} 
    \frac{|v|_{0,p,\tK}}{|v|_{1,p,\tK}} \le \, 2A_p
\end{gather*}
and
\begin{gather*}
    \sup_{u\in \Phi_p(K)}\frac{|u|_{0,p,K}}{|u|_{1,p,K}}
      \le \, 2 B_p(\hK) h_K, \qquad 
    \sup_{u\in \E_p(K)}\frac{|u|_{0,p,K}}{|u|_{1,p,K}}
      \le \, 2 A_ph_K.
\end{gather*}
Note that if $p \neq 2$, the Sobolev norms are affected by a rotation.
Therefore, we have obtained the following theorem.

\begin{theorem} \label{Thm0-1}
Let $K$ be an arbitrary triangle and $h_K:=\mathrm{diam}K$.  There exists a
constant $C=C(p)$ depending only on $p$ such that
\begin{equation*}
  \sup_{u\in \Phi_p(K)}\frac{|u|_{0,p,K}}{|u|_{1,p,K}}
      \le \, C h_K, \quad 
    \sup_{u\in \E_p(K)}\frac{|u|_{0,p,K}}{|u|_{1,p,K}}
      \le \, C h_K, \quad 1 \le p \le \infty.
\end{equation*}
\end{theorem}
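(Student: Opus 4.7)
The plan is to chain together the sequence of pull-back estimates that have been built up in this section, passing through the intermediate triangles $\hK$, $\Ka$, and $\tK$ before arriving at an arbitrary triangle $K$. The overall strategy is exactly the one already rehearsed in the discussion leading up to the theorem statement, so the proof is largely a matter of assembling the pieces in the correct order and carefully tracking the scaling factors.

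First, I would record that Lemmas~\ref{L3.1} and \ref{L3.2} give $A_p < \infty$ and $B_p(\hK) < \infty$ for every $p \in [1,\infty]$, which handles the right reference triangle. Pulling back through the anisotropic map $F_\alpha(x,y) = (x,\alpha y)^\top$, the inequalities \eqref{B-est} and \eqref{C-est} yield $B_p(\Ka) \le B_p(\hK)$ and $C_p(\Ka) \le A_p$, crucially \emph{uniformly in} $\alpha \in (0,1]$; the anisotropic Jacobian cancels in exactly the right way because the $y$-derivative contribution appears with a favorable sign in the denominator.

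Next, for the skewed triangle $\tK$, I would invoke the affine change of variables $\bfy = A\bfx$ from Section~2.4; the estimate \eqref{trans1} loses at most a factor of $2$, producing $B_p(\tK) \le 2 B_p(\hK)$ and $C_p(\tK) \le 2 A_p$. Finally, for a completely general triangle $K$ with edges ordered $h_K \ge h_1 \ge h_2$, I would apply a rigid motion (rotation, translation, mirror imaging) placing the second longest edge on a coordinate axis, followed by the isotropic similarity $G_{1/h_1}$, which reduces $K$ to $\tK$. The scaling identity $|v|_{k,p,K_1} = \beta^{2/p-k}|u|_{k,p,K}$ shows that the ratio $|u|_{0,p,K}/(h_1|u|_{1,p,K})$ is invariant under this similarity, so transporting the bound on $\tK$ back to $K$ and using the trivial comparison $h_1 \le h_K$ delivers the theorem with $C(p) = 2\max\{A_p, B_p(\hK)\}$.

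The one delicate point --- and the main conceptual obstacle already addressed in the preceding subsection --- is ensuring that the constant does not secretly depend on the \emph{shape} of $K$, i.e.\ on $\alpha$ or on the slope parameter $s$. This is exactly where the convention to orient the \emph{second longest} edge (rather than the longest) and the $(1\pm|s|)/t^2$ eigenvalue bound of \eqref{eigen-est} pay off: the constraint $|s| \le \alpha/2 \le 1/2$ forces $2^{\eta(p)}(1+|s|)^{1/2}$ to be uniformly bounded, so all shape-dependent factors are absorbed into a harmless numerical constant and the final bound depends only on $p$, as claimed.
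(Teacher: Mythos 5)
Your proposal is correct and follows essentially the same route as the paper: the theorem is obtained exactly by chaining Lemmas~\ref{L3.1} and \ref{L3.2} through \eqref{B-est}, \eqref{C-est}, and \eqref{trans1}, then using the similarity $G_{1/h_1}$ (with the second longest edge normalized to length $1$) and $h_1 \le h_K$, yielding $C(p) = 2\max\{A_p, B_p(\hK)\}$. Your identification of the shape-uniformity issue ($|s| \le 1/2$ and the uniformity in $\alpha$ of the anisotropic pull-back) as the crux matches the paper's argument.
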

An important point in Theorem~\ref{Thm0-1} is that the constant $C$
is independent of the geometry of $K$.

For $f \in L^1(K)$, we define $\bar f \in \R$ by
\[
    \bar f := \frac{1}{|K|}\int_K f(\bfx) \dd \bfx.
\]
From this definition, it is clear that, for arbitrary $f \in L^p(K)$,
\begin{equation}
    \int_K (f - \bar f) \dd \bfx = 0 \quad \text{ and } \quad
     |\bar f|_{0,p,K} \le |f|_{0,p,K}.
   \label{basic-property}
\end{equation}
Hence, we may apply Theorem~\ref{Thm0-1} to obtain
the Poincar\'e--Wirtinger inequality for triangles.

\begin{corollary}[Poincar\'e--Wirtinger inequality] \label{CorPW}
Let $K$ be an arbitrary triangle.
Then, for $p$, $1 \le p \le \infty$ and the constant $C = C(p)$
that appeared in Theorem~\ref{Thm0-1}, the following estimation holds:
\begin{equation*}
    |f - \bar f|_{0,p,K} \le C h_K |f|_{1,p,K}, 
       \qquad \forall f \in W^{1,p}(K).
\end{equation*}
\end{corollary}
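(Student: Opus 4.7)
The plan is to obtain the Poincaré--Wirtinger inequality as a direct corollary of Theorem~\ref{Thm0-1} by applying it to the zero-mean function $g := f - \bar f$. First I would observe that the constant $\bar f \in \R$ is well-defined for any $f \in W^{1,p}(K) \subset L^1(K)$ (since $K$ is bounded), so $g \in W^{1,p}(K)$. The crucial point is that $g$ lies in the class $\Phi_p(K)$: by the first identity in \eqref{basic-property}, we have
\[
   \int_K g \, \dd \bfx = \int_K (f - \bar f)\, \dd \bfx = 0,
\]
which is exactly the defining condition of $\Phi_p(K)$.

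Next, since $\bar f$ is a constant, differentiation annihilates it, giving $\partial^\delta g = \partial^\delta f$ for every multi-index $\delta$ with $|\delta|=1$, and therefore $|g|_{1,p,K} = |f|_{1,p,K}$. Applying the first estimate of Theorem~\ref{Thm0-1} to $g \in \Phi_p(K)$ then yields
\[
   |f - \bar f|_{0,p,K} = |g|_{0,p,K} \le C h_K |g|_{1,p,K} = C h_K |f|_{1,p,K},
\]
which is the claimed inequality. There is no real obstacle here: the entire content has already been absorbed into Theorem~\ref{Thm0-1}, whose strength lies in the fact that the constant $C=C(p)$ is independent of the shape of $K$. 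The corollary merely reinterprets the $\Phi_p(K)$-estimate through the canonical representative $\bar f$ of the quotient $W^{1,p}(K)/\R$.
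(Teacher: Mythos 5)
Your proof is correct and follows exactly the paper's own route: the paper likewise notes via \eqref{basic-property} that $f-\bar f\in\Phi_p(K)$ and then applies the first estimate of Theorem~\ref{Thm0-1}, using that the constant $\bar f$ does not affect the first-order seminorm. Nothing is missing.
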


\noindent
\textit{Remark:} The Poincar\'e--Wirtinger inequality is standard and
mentioned in many textbooks.  However, the inequality is generally shown 
under conditions on the domains.  For example, it is stated
in \cite{Brezis} with the condition that the domain is
of $C^1$ class.  In \cite{GilbargTrudinger}, the inequality (7.45)
on page 164 can be read as
\[
    |f - \bar f|_{0,p,\Omega} \le
    \left(\frac{\omega_d}{|\Omega|}\right)^{1-1/d}(\mathrm{diam}\Omega)^d
     |f|_{1,p,\Omega}, \qquad \forall f \in W^{1,p}(\Omega),
\]
where $\Omega \subset \R^d$ is a bounded convex domain and
$\omega_d$ is the $(d-1)$-dimensional Hausdorff measure of the unit sphere
$S^{d-1} \subset \R^d$.  Note that if $\Omega$ becomes very ``flat'',
then the coefficient on the right-hand side diverges.
For cases of degenerate (``flat'') domains, Payne--Weinberger
\cite{PayneWeinberger} and Laugesen--Siudeja \cite{LaugesenSiudeja} gave
estimations for the case $p=2$.  Thus, Corollary~\ref{CorPW} is an
extension of prior results.

\vspace{0.4cm}
Because of \eqref{basic-property}, the following lemma obviously holds.
\begin{lemma}\label{Lem0-0}
For any $f \in L^p(K)$, $1 \le p \le \infty$, we have
\begin{equation*}
  |f - \bar{f}|_{0,p,K} \le 2 |f|_{0,p,K}.
\end{equation*}
\end{lemma}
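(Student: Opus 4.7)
The plan is to prove the estimate by a direct two-line application of the triangle inequality in $L^p(K)$ followed by the second half of the already-established property \eqref{basic-property}. No geometric argument is needed because the bound is linear in $|f|_{0,p,K}$ with an absolute constant.

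First I would invoke Minkowski's inequality (the triangle inequality for the $L^p$-seminorm, valid uniformly for $1 \le p \le \infty$) to write
\[
  |f - \bar f|_{0,p,K} \le |f|_{0,p,K} + |\bar f|_{0,p,K}.
\]
Then I would apply the inequality $|\bar f|_{0,p,K} \le |f|_{0,p,K}$ from \eqref{basic-property} to the second term on the right, yielding the claimed factor of $2$. For completeness I might note that the bound $|\bar f|_{0,p,K} \le |f|_{0,p,K}$ itself is just Jensen's (or H\"older's) inequality applied to the constant function $\bar f$: one has $|\bar f| \le |K|^{-1/p}|f|_{0,p,K}$ for $p < \infty$ by H\"older, so that $|\bar f|_{0,p,K} = |\bar f|\,|K|^{1/p} \le |f|_{0,p,K}$, and the $p=\infty$ case is immediate from $|\bar f| \le \operatorname*{ess\,sup}_K |f|$.

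There is no genuine obstacle in this proof: the statement is essentially a reminder that the averaging map $f \mapsto \bar f$ has operator norm $\le 1$ on $L^p(K)$, so $\mathrm{id} - \overline{(\cdot)}$ has operator norm $\le 2$. The only thing to be careful about is to apply the correct form of the triangle inequality for all $p\in[1,\infty]$ (Minkowski in the finite case, and the essential-supremum triangle inequality when $p=\infty$), but both are standard and covered uniformly by the $L^p$-(semi)norm axioms already used implicitly throughout Section~\ref{sec:CrouzeixRviart}.
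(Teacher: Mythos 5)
Your argument is correct and is exactly the one the paper intends: the lemma is stated as an immediate consequence of \eqref{basic-property}, namely the triangle inequality for the $L^p$-seminorm combined with $|\bar f|_{0,p,K} \le |f|_{0,p,K}$. Your additional verification of that averaging bound via H\"older's inequality is a detail the paper simply asserts as clear, so there is no substantive difference.
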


\vspace{0.4cm}
We now consider error estimates of the Crouzeix--Raviart interpolation
$\Icr v$.  Let $K$ be an arbitrary triangle and $1 \le p \le \infty$.
From the definition of $\Icr v$ and the divergence theorem, we notice that
\begin{align*}
   \int_{K} \left(v - \Icr v\right)_x \dd \bfx =
  \int_{\partial K} \left(v - \Icr v\right) n_1 \dd s
   = \sum_{i=1}^3 n_1 \int_{e_i} \left(v - \Icr v\right) \dd s = 0,
\end{align*}
where $\mathbf{n} = (n_1,n_2)^\top$ is the outer unit normal vector
on $\partial K$, which is a constant vector on each edge.
Similarly, we have
\begin{align*}
    \int_{K} \left(v - \Icr v\right)_y \dd \bfx = 0.
\end{align*}
Because $\Icr v \in \PP_1$ and $\left(\Icr v\right)_x$,
$\left(\Icr v\right)_y$ are constants on $K$, these equalities imply that
\begin{gather*}
   \left(\Icr v\right)_x = \frac{1}{|K|} \int_K v_x \dd \bfx
    =: \overline{v_x}, \qquad
    \left(\Icr v\right)_y = \frac{1}{|K|} \int_K v_y \dd \bfx
    =: \overline{v_y}, \\
   \Icr v (x,y) = (\overline{v_x}) x + (\overline{v_y}) y + c,
  \qquad c \in \R.
\end{gather*}
Therefore, \eqref{basic-property} and Poincar\'e--Wirtinger inequality
yield, for arbitrary $v \in W^{2,p}(K)$,
\begin{align}
 \left|v - \Icr v\right|_{1,p,K} \le C h_K |v|_{2,p,K} \quad \text{ and }
  \quad \left|\Icr v\right|_{1,p,K} \le |v|_{1,p,K}.
  \label{est-12}
\end{align}

Note that $v - \Icr v \in \E_p(K)$ for any $v \in W^{1,p}(K)$.
Thus, Theorem~\ref{Thm0-1} and \eqref{est-12} imply that
\begin{align}
   |v - \Icr v|_{0,p,K} \le C h_K |v - \Icr v|_{1,p,K} \le C^2h_K^2
 |v|_{2,p,K} \quad \forall v \in W^{2,p}(K).
  \label{est-012}
\end{align}
Moreover, it follows from Lemma~\ref{Lem0-0} that, for $1 \le p < \infty$,
\begin{align*}
 |v - \Icr v|_{1,p,K}^p & = |v_x - \left(\Icr v\right)_x|_{0,p,K}^p
  + |v_y - \left(\Icr v\right)_y|_{0,p,K}^p \\
  & = |v_x - \overline{v_x}|_{0,p,K}^p
  + |v_y - \overline{v_y}|_{0,p,K}^p \\
  & \le 2^p\left(|v_x|_{0,p,K}^p + |v_y|_{0,p,K}^p\right)  
   = 2^p |v|_{1,p,K}^p.
\end{align*}
The case of $p = \infty$ is similar.  Hence, we obtain
\begin{align}
   |v - \Icr v|_{0,p,K} \le 2C h_K |v|_{1,p,K},
  \qquad \forall v \in W^{1,p}(K).
  \label{est-01}
\end{align}

Gathering estimates \eqref{est-012} and \eqref{est-01}, we obtain
the following theorem.
\begin{theorem}\label{thm-main2}
Let $K$ be an arbitrary triangle and $1 \le p \le \infty$.
Then, for the Crouzeix--Raviart interpolation $\Icr v$, the following
error estimations hold:
\begin{gather*}
     |v - \Icr v|_{0,p,K} \le 2 C h_K |v|_{1,p,K},
  \qquad \forall v \in W^{1,p}(K), \\
   |v - \Icr v|_{0,p,K} \le C h_K |v - \Icr v|_{1,p,K} \le C^2h_K^2
 |v|_{2,p,K}, \quad \forall v \in W^{2,p}(K).
\end{gather*}
Here, the constant $C=C(p)$ is from Theorem~\ref{Thm0-1} and is
independent of the geometry of $K$.
\end{theorem}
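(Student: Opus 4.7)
The plan is to reduce both estimates to Theorem~\ref{Thm0-1} applied to the error $v - \Icr v$, after first pinning down $\Icr v$ explicitly. Since $\Icr v \in \PP_1$, the partials $(\Icr v)_x$, $(\Icr v)_y$ are constants on $K$. Writing $\int_K (v - \Icr v)_x \dd \bfx$ as a boundary integral via the divergence theorem, and using the defining identities $\int_{e_i}(v - \Icr v)\dd s = 0$ together with the fact that the outer unit normal is piecewise constant on $\partial K$, I obtain $(\Icr v)_x = \overline{v_x}$ and, by the same argument, $(\Icr v)_y = \overline{v_y}$. In particular $v - \Icr v \in \E_p(K)$ by the very construction of $\Icr v$, so Theorem~\ref{Thm0-1} yields
\[
    |v - \Icr v|_{0,p,K} \le C h_K |v - \Icr v|_{1,p,K},
\]
with $C=C(p)$ independent of the geometry of $K$. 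This single bound is the common engine for both inequalities.

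For the $W^{2,p}$ case, I close the estimate by controlling $|v - \Icr v|_{1,p,K}$ in terms of $|v|_{2,p,K}$. Because the partials of $v - \Icr v$ are exactly $v_x - \overline{v_x}$ and $v_y - \overline{v_y}$, applying the Poincar\'e--Wirtinger inequality (Corollary~\ref{CorPW}) to $v_x$ and $v_y$ separately and summing in the $\ell^p$ sense gives $|v - \Icr v|_{1,p,K} \le C h_K |v|_{2,p,K}$; chaining with the previous display produces the $C^2 h_K^2 |v|_{2,p,K}$ bound and simultaneously delivers the intermediate inequality stated in the theorem.

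For the $W^{1,p}$ case I cannot afford a second derivative, so I instead invoke Lemma~\ref{Lem0-0} componentwise on the averaged partials to conclude $|v - \Icr v|_{1,p,K} \le 2 |v|_{1,p,K}$, and then combine with the $\E_p$-estimate to obtain $|v - \Icr v|_{0,p,K} \le 2 C h_K |v|_{1,p,K}$. There is no genuine conceptual obstacle here; the only care required is the consistent bookkeeping of the $\ell^p$-sum over $|\delta|=1$ and the obvious $p=\infty$ variant. All the geometry has already been absorbed into the geometry-independent constant of Theorem~\ref{Thm0-1}, which is precisely what makes the final bounds uniform across arbitrary, possibly degenerate, triangles.
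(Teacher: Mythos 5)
Your proposal is correct and follows essentially the same route as the paper: identifying $(\Icr v)_x=\overline{v_x}$, $(\Icr v)_y=\overline{v_y}$ via the divergence theorem, using $v-\Icr v\in\E_p(K)$ with Theorem~\ref{Thm0-1}, and closing with Corollary~\ref{CorPW} in the $W^{2,p}$ case and Lemma~\ref{Lem0-0} in the $W^{1,p}$ case. No gaps worth noting.
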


\section{Approximating the surface area by Crouzeix--Raviart interpolation}
\label{main-section}
Recall that $\Omega \subset \R^2$ is a polygonal domain and
$\{\tau_k\}_{k=1}^\infty$ is a sequence of triangulations
of $\Omega$ with $\lim_{k \to \infty} |\tau_k| = 0$.
Let $f \in W^{1,\infty}(\Omega)$.
The surface area $A_L(f)$ in the sense of Lebesgue is approximated by
Crouzeix--Raviart interpolation as
\begin{align*}
   A_{\tau_k}^{CR}(f) := \sum_{K \in \tau_k} \int_K
   \sqrt{1 + |\nabla (\I_K^{CR} f)|^2} \, \dd \bfx.
\end{align*}

Let $\varepsilon > 0$ be arbitrarily taken and fixed.
We may take $f_\varepsilon \in W^{2,1}(\Omega)$ such that
$|f - f_\varepsilon|_{1,1,\Omega} < \varepsilon$.
There exists an integer $N$ such that, for any integer
$k \ge N$, we have
$C |\tau_k| |f_\varepsilon|_{2,1,\Omega} < \varepsilon$,
where the constant $C$ is from Theorem~\ref{thm-main2}.
It follows from \eqref{ineq1} and \eqref{est-12} that, for $k \ge N$,
\begin{align*}
 |A_L(f) - & A_{\tau_k}^{CR}(f)| \le \sum_{K \in \tau_k}
 |f - \I_K^{CR} f|_{1,1,K} \\
 & \le \sum_{K \in \tau_k} \left(
   |f - f_\varepsilon|_{1,1,K} 
 + |f_\varepsilon - \I_K^{CR} f_\varepsilon|_{1,1,K}
 + |\I_K^{CR} (f_\varepsilon - f)|_{1,1,K}\right) \\
 & \le 2|f - f_\varepsilon|_{1,1,\Omega}
   + C |\tau_k| |f_\varepsilon|_{2,1,\Omega} < 3 \varepsilon.
\end{align*}
Therefore, we have shown the following theorem.

\begin{theorem}\label{thm:main}
Let $\Omega \subset \R^2$ be a bounded polygonal domain
and $\{\tau_k\}_{k=1}^\infty$ be a sequence of triangulations
of $\Omega$ such that $\lim_{k \to \infty} |\tau_k| = 0$.
Let $f \in W^{1,\infty}(\Omega)$ and $A_{\tau_k}^{CR}(f)$ be the
approximation of the surface area $A_L(f)$ by Crouzeix--Raviart
interpolation.  Then, we have
\begin{equation}
   \lim_{k \to \infty} A_{\tau_k}^{CR}(f) = A_L(f).
   \label{convCR}
\end{equation}
\end{theorem}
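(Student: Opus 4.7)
The plan is to imitate the structure of the proof of Theorem~\ref{Young}, exploiting one decisive advantage of the Crouzeix--Raviart interpolation: since $(\I_K^{CR}v)_x = \overline{v_x}$ and $(\I_K^{CR}v)_y = \overline{v_y}$, the stability bound $|\I_K^{CR} v|_{1,p,K} \le |v|_{1,p,K}$ holds on \emph{every} triangle with no shape constraint. This is the structural reason why no geometric condition on $\{\tau_k\}$ will be needed, in contrast to the Lagrange case where Lemma~\ref{lem3.1} required the maximum angle condition to obtain the analogous stability. Also, because $f \in W^{1,\infty}(\Omega)$, Tonelli's theorem guarantees $A_L(f) = \int_\Omega \sqrt{1 + |\nabla f|^2}\,\dd\bfx$, so it suffices to compare the two integrals directly.

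The first step is to reduce to smooth data by density. For arbitrary $\varepsilon>0$, I pick $f_\varepsilon \in W^{2,1}(\Omega)$ with $|f - f_\varepsilon|_{1,1,\Omega} < \varepsilon$. Then, applying \eqref{ineq1} on each triangle $K\in\tau_k$ and summing, I get
\[
|A_L(f) - A_{\tau_k}^{CR}(f)| \le \sum_{K\in\tau_k} |f - \I_K^{CR}f|_{1,1,K}.
\]
I split the right-hand side via the triangle inequality into three contributions: $|f - f_\varepsilon|_{1,1,\Omega}$, $\sum_K |f_\varepsilon - \I_K^{CR}f_\varepsilon|_{1,1,K}$, and $\sum_K |\I_K^{CR}(f_\varepsilon-f)|_{1,1,K}$.

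The first term is already below $\varepsilon$. The third term I control using the stability half of \eqref{est-12} applied with $p=1$, which gives $|\I_K^{CR}(f_\varepsilon - f)|_{1,1,K} \le |f_\varepsilon - f|_{1,1,K}$; summing over $K$ yields a bound of $\varepsilon$ again. The middle term is the interpolation error on smooth data: by the second bound in \eqref{est-12} (Theorem~\ref{thm-main2}) with $p=1$,
\[
\sum_{K\in\tau_k}|f_\varepsilon - \I_K^{CR}f_\varepsilon|_{1,1,K} \le C |\tau_k|\, |f_\varepsilon|_{2,1,\Omega},
\]
which for $k$ sufficiently large is also below $\varepsilon$. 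Combining the three pieces gives $|A_L(f) - A_{\tau_k}^{CR}(f)| < 3\varepsilon$ eventually, and since $\varepsilon>0$ was arbitrary, \eqref{convCR} follows.

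The main obstacle — and simultaneously the point that makes the theorem work with no geometric hypothesis — is that the constant $C$ in Theorem~\ref{thm-main2} must be independent of the shape of $K$, and the stability estimate for $\I_K^{CR}$ must also be shape-independent. Both are already established: the shape-independence of $C$ traces back to the Babu\v{s}ka--Aziz constant bound $A_p < \infty$ and the affine scaling argument in Section~2.4, while the stability follows from the explicit mean-value identities $(\I_K^{CR}v)_x = \overline{v_x}$, $(\I_K^{CR}v)_y = \overline{v_y}$ together with \eqref{basic-property}. Granting these, the argument is a routine $3\varepsilon$-chase that does not see the geometry of $\tau_k$ at all.
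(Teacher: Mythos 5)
Your proposal is correct and follows essentially the same route as the paper's own proof: the identity $A_L(f)=\int_\Omega\sqrt{1+|\nabla f|^2}\,\dd\bfx$ from Tonelli's theorem, the reduction via \eqref{ineq1}, the three-term splitting with $f_\varepsilon\in W^{2,1}(\Omega)$, and the shape-independent stability and error bounds of \eqref{est-12} are exactly the ingredients the authors use in their $3\varepsilon$ argument. Your added remark identifying the mean-value identities $(\I_K^{CR}v)_x=\overline{v_x}$, $(\I_K^{CR}v)_y=\overline{v_y}$ as the structural reason no geometric condition is needed matches the paper's emphasis.
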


It is clear from the proof that
\eqref{convCR} holds under the assumptions that
$f \in W^{1,1}(\Omega) \cap C^0(\overline{\Omega})$ with
$A_L(f) < \infty$ and $f$ is absolutely continuous in the sense of
Tonelli.  We here strongly emphasize that \textit{we have not imposed
any geometric conditions on $\{\tau_k\}$}, such as the maximum angle
condition, or the circumradius condition.

\section{Approximating areas of surfaces in parametric form}
\label{sec:parametric}
In this section, we show that the results obtained so far can be
straightforwardly extended to the case of parametric surfaces.
Let $\Omega \subset \R^2$ be a bounded polygonal domain and
$\bff : \Omega \to \R^3 \in W^{1,\infty}(\Omega;\R^3)$.
Because the Jacobian matrix
\begin{equation*}
  D\bff(\bfx) :=
\begin{pmatrix}
   \frac{\partial f^1}{\partial x} & \frac{\partial f^1}{\partial y} \\
   \frac{\partial f^2}{\partial x} & \frac{\partial f^2}{\partial y} \\
   \frac{\partial f^3}{\partial x} & \frac{\partial f^3}{\partial y} \\
 \end{pmatrix}
\end{equation*}
exists almost everywhere in $\Omega$, we may assume that 
$\mathrm{rank}Df(\bfx) = 2$ almost everywhere in $\Omega$.  Then, the
image of $\bff$ is a surface (possibly with self-intersections) in
$\R^3$.  Its area $A_L(\bff)$ in the sense of Lebesgue is defined as
before (see \cite{Saks} for details), and is equal to
\begin{equation*}
   A_L(\bff) = \int_\Omega |\bff_x \times \bff_y| \dd\bfx,
\end{equation*} 
where  
\[
   \bff_x := \left(\frac{\partial f^1}{\partial x},
      \frac{\partial f^2}{\partial x}, \frac{\partial f^3}{\partial x}
        \right)^\top, \qquad
      \bff_y := \left(\frac{\partial f^1}{\partial y},
      \frac{\partial f^2}{\partial y}, \frac{\partial f^3}{\partial y}
        \right)^\top
\]
and $\bff_x \times \bff_y$ is the exterior product of $\bff_x$,
$\bff_y$.  The surface area $A_L(\bff)$ can be discussed
in terms of the Hausdorff measure and the area formula.  See
\cite[Chapter~4]{EvansGariepy} for details.

We now consider interpolations of $\bff$.  Let
$\{\tau_n\}_{n=1}^\infty$ be a sequence of triangulations of $\Omega$.
On each $\tau_n$, the Lagrange and Crouzeix--Raviart interpolations
$\I_{K}^L\bff$, $\I_{K}^{CR}\bff$ are defined component-wise.
Then, $A_L(\bff)$ is approximated by $A_L(\I_{\tau_n}^L\bff)$
and
\[
   A_{\tau_n}^{CR}(\bff) := \sum_{K \in \tau_n}
   \int_K \left|\left(\I_K^{CR}\bff\right)_x \times
 \left(\I_K^{CR}\bff\right)_y\right| \dd\bfx,
\]
respectively.

To simplify the notation, we introduce the vectors
$\F = (F_1,F_2,F_3)^\top$ and $\G = (G_1,G_2,G_3)^\top$
defined by
\begin{align*}
 \F := \bff_x \times \bff_y, \qquad
 \G := \bfg_x \times \bfg_y,
\end{align*}
where 
$\bfg := \I_{K}^L\bff$ or $\bfg := \I_{K}^{CR} \bff$.  Then, the error
$\left|A_L(\bff) - A_L(\I_{\tau_n}^L\bff)\right|$ is estimated as
\begin{align*}
   \left|A_L(\bff) - A_L(\I_{\tau_n}^L\bff)\right| & =
    \left|\int_\Omega |\F| \dd \bfx - 
          \int_\Omega |\G| \dd \bfx \right| \\
  & \le \sum_{i=1}^3 \int_\Omega 
  \frac{|F_i +G_i||F_i - G_i|}{|\F| + |\G|} \dd \bfx \\
  & \le \sum_{K \in \tau_n}\sum_{i=1}^3 \int_K |F_i - G_i| \dd \bfx,
\end{align*}
because $|F_i+G_i|/(|\F| + |\G|) \le 1$.
The error $\left|A_L(\bff) - A_{\tau_n}^{CR}(\bff)\right|$ is 
estimated in a similar manner.

Note that $F_i$ and $G_i$ are written as
\begin{align*}
   F_i = f_x^k f_y^l - f_y^k f_x^l, \qquad
   G_i = g_x^k g_y^l - g_y^k g_x^l, \quad k,l= 1,2,3, \; k \neq l,
\end{align*}
where $g^i = \I_{K}^Lf^i$ or $g^i = \I_{K}^{CR}f^i$.
Therefore, we see that
\begin{align*}
 |F_i - G_i| & \le |f_x^k - g_x^k||f_y^l| 
   + |g_x^k||f_y^l - g_y^l| + |f_y^k - g_y^k| |f_x^l|
   + |g_y^k||f_x^l - g_x^l|
\end{align*}
and
\begin{align*}
   \sum_{i=1}^3 \int_K |F_i - G_i| \dd \bfx
  \le  \left(|\bff|_{1,\infty,K} + |\bfg|_{1,\infty,K}\right)
            |\bff - \bfg|_{1,1,K}.
\end{align*}

For the case of Lagrange interpolation, $\bfg = \I_K^L\bff$ and
we assume that a sequence of triangulations $\{\tau_n\}$ of $\Omega$
satisfies the maximum angle condition.  Then, by
Lemma~\ref{lem3.1}, there exists a constant $C_1$ such that
$|\I_K^L\bff|_{1,\infty,K} \le C_1|\bff|_{1,\infty,K}$, where
the constant $C_1$ depends on the maximum angle.  Thus, we have
\begin{align*}
  \left|A_L(\bff) - A_L(\I_{\tau_n}^L\bff)\right| & =
  (1+C_1) \sum_{K \in \tau_n} |\bff|_{1,\infty,K}
   |\bff - \bfg|_{1,1,K} \\
  & \le (1+C_1) |\bff|_{1,\infty,\Omega}
   \left|\bff - \I_{\tau_n}^L \bff\right|_{1,1,\Omega}.
\end{align*}

Similarly, for the case of Crouzeix--Raviart interpolation, we
have $\bfg = \I_K^{CR}\bff$ and
\begin{align*}
  \left|A_L(\bff) - A_\tau^{CR}(\bff)\right| \le
  2 \sum_{K \in \tau_n} |\bff|_{1,\infty,K}
   \left|\bff - \I_{K}^{CR}\bff\right|_{1,1,K}
\end{align*}
without any geometric condition on the triangulations.

From these inequalities, the following theorem can be shown in
exactly the same manner as used in Sections~3 and 5.

\begin{theorem}
Let $\Omega \subset \R^2$ be a bounded polygonal domain and
$\{\tau_k\}$ be a sequence of triangulations of $\Omega$.
Let $\bff:\Omega \to \R^3$ belong to $W^{1,\infty}(\Omega;\R^3)$ and
$\mathrm{rank}D\bff(\bfx) = 2$ almost everywhere in $\Omega$.

If $\{\tau_k\}$ satisfies the maximum angle condition,
we have the convergence
\begin{equation*}
   \lim_{k \to \infty} A_L(\I_{\tau_k}^L \bff) = A_L(\bff)
%    \label{convergence-Lagrange}
\end{equation*}
for Lagrange interpolation. %$A_L(\I_{\tau_k}^L \bff)$.
Furthermore, we have 
\begin{equation*}
   \lim_{k \to \infty} A_{\tau_k}^{CR}(\bff) = A_L(\bff)
%    \label{convergence-CR}
\end{equation*}
for Crouzeix--Raviart interpolation without
any geometric condition on the triangulation $\{\tau_k\}$.
\end{theorem}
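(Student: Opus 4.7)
Essentially all of the analytic work has been done in the paragraphs preceding the theorem, where it is shown that
\begin{gather*}
 \left|A_L(\bff) - A_L(\I_{\tau_k}^L\bff)\right| \le (1+C_1)\,|\bff|_{1,\infty,\Omega}\,\left|\bff - \I_{\tau_k}^L \bff\right|_{1,1,\Omega}, \\
 \left|A_L(\bff) - A_{\tau_k}^{CR}(\bff)\right| \le 2\,|\bff|_{1,\infty,\Omega}\sum_{K \in \tau_k}\left|\bff - \I_K^{CR}\bff\right|_{1,1,K}.
\end{gather*}
Since $|\bff|_{1,\infty,\Omega}<\infty$ is fixed, the whole task reduces to showing that, componentwise, the interpolation errors on the right go to zero. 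In other words, once these bounds are in hand we are back in the scalar situation already settled by Theorems~\ref{Young} and~\ref{thm:main}, merely applied to each $f^i$, $i=1,2,3$, in parallel.

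\textbf{Lagrange part.} The argument will be a mechanical repetition of the proof of Theorem~\ref{Young}. For a fixed $\varepsilon>0$ I choose $\bff_\varepsilon\in W^{2,\infty}(\Omega;\R^3)$ with $|\bff-\bff_\varepsilon|_{1,\infty,\Omega}<\varepsilon$ and decompose
\[
 \left|\bff - \I_{\tau_k}^L\bff\right|_{1,1,\Omega} \le |\bff-\bff_\varepsilon|_{1,1,\Omega} + |\bff_\varepsilon - \I_{\tau_k}^L\bff_\varepsilon|_{1,1,\Omega} + |\I_{\tau_k}^L(\bff_\varepsilon - \bff)|_{1,1,\Omega}.
\]
The first summand is bounded by $|\Omega|\varepsilon$; the second is estimated componentwise via $|f^i_\varepsilon - \I_{\tau_k}^L f^i_\varepsilon|_{1,\infty,\Omega}\le C_2 R_k |f^i_\varepsilon|_{2,\infty,\Omega}$, where the maximum angle condition forces $R_k\to 0$; the third is controlled componentwise by Lemma~\ref{lem3.1}. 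Letting first $k\to\infty$ and then $\varepsilon\to 0$ yields the first limit in the theorem.

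\textbf{Crouzeix--Raviart part.} I mimic the proof of Theorem~\ref{thm:main}. For fixed $\varepsilon>0$, choose $\bff_\varepsilon\in W^{2,1}(\Omega;\R^3)$ with $|\bff-\bff_\varepsilon|_{1,1,\Omega}<\varepsilon$ and, on every $K\in\tau_k$, insert $\bff_\varepsilon$:
\[
 \left|\bff - \I_K^{CR}\bff\right|_{1,1,K} \le |\bff-\bff_\varepsilon|_{1,1,K} + |\bff_\varepsilon - \I_K^{CR}\bff_\varepsilon|_{1,1,K} + |\I_K^{CR}(\bff_\varepsilon-\bff)|_{1,1,K}.
\]
The middle term is bounded by $C h_K|\bff_\varepsilon|_{2,1,K}$ via Theorem~\ref{thm-main2} applied componentwise; the last term is controlled componentwise by $|\bff_\varepsilon - \bff|_{1,1,K}$ using the gradient contraction $|\I_K^{CR}v|_{1,1,K}\le|v|_{1,1,K}$ from \eqref{est-12}. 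Summing over $K\in\tau_k$, using $|\tau_k|\to 0$, and letting $\varepsilon\to 0$ produces the second limit, and crucially requires no geometric condition on the triangulations.

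\textbf{Main difficulty.} There is no genuinely new obstacle to overcome: the pre-theorem derivation already peels off the vector-valued structure by means of the product/difference identity $F_i-G_i = (f_x^k-g_x^k)f_y^l + g_x^k(f_y^l-g_y^l) - \cdots$, leaving only scalar convergences that are already established. The only place where small care is needed is the choice of smoothing norm for $\bff_\varepsilon$ ($W^{2,\infty}$ in the Lagrange step, so that Lemma~\ref{lem3.1} applies, versus $W^{2,1}$ in the Crouzeix--Raviart step, so that Theorem~\ref{thm-main2} supplies the decisive $h_K$-factor uniformly in the geometry of $K$); both density statements are standard.
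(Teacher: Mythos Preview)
Your proposal is correct and follows exactly the route the paper indicates: the paper itself derives the two pre-theorem inequalities and then simply states that the theorem ``can be shown in exactly the same manner as used in Sections~3 and 5,'' which is precisely the componentwise density-plus-stability argument you wrote out. The only cosmetic difference is that you pulled $|\bff|_{1,\infty,\Omega}$ outside the sum in the Crouzeix--Raviart bound, whereas the paper keeps $|\bff|_{1,\infty,K}$ inside; since $|\bff|_{1,\infty,K}\le|\bff|_{1,\infty,\Omega}$ this is harmless.
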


\section{Numerical experiments and concluding remarks}
\label{numericalexpriments}
To confirm the results obtained in this paper, we conducted
numerical experiments.  Let $\Omega := (-1,1) \times (-1,1)$ and $N$ be
a positive integer.  We use the triangulation $\tau$ that consists of
congruent isosceles triangles with base length $h:=2/N$ and height
$2/\lfloor2/h^\alpha \rfloor \approx h^\alpha$, $\alpha > 1$.  Note
that the circumradius of the triangle is approximately equal to 
$h^\alpha/2 + h^{2-\alpha}/8$.  Thus, it diverges when $\alpha > 2$ as
$N \to \infty$.  The triangulation of $\Omega$ with $N=12$ and
$\alpha = 1.6$ is shown in Figure~4.%$\ref{fig4}.

\vspace{0.5cm}
%\begin{figure}[H]
\begin{center}
\begin{minipage}[c]{5.6cm}
  \includegraphics[width=5.5cm]{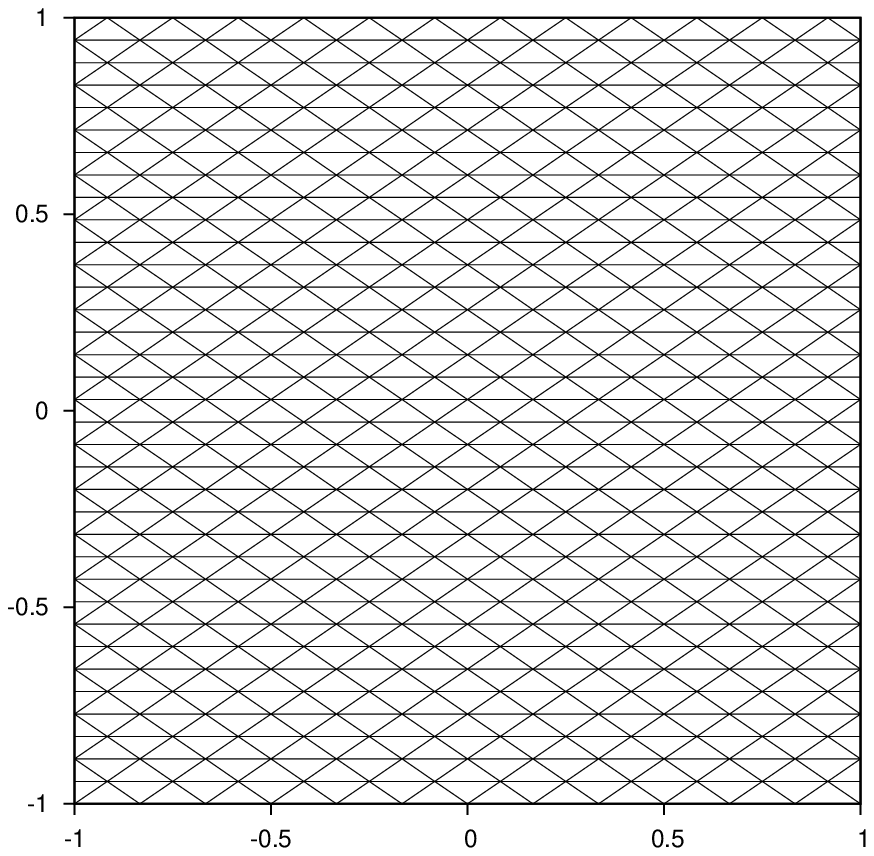}
 \end{minipage}
 \begin{minipage}[c]{5.6cm}

 {\small \textbf{Fig.~4} The triangulation of $\Omega$ with $N=12$ and $\alpha=1.6$.}
 \end{minipage}      
\end{center}
% \caption{The triangulation of $\Omega$ with $N=12$ and $\alpha=1.6$.}
%\label{fig4}
%\end{figure}

\setcounter{figure}{4}
Let $f(x,y) := (a^2 - x^2)^{1/2}$ with $a = 1.1$.  We computed
$|A_L(f) - A_E(\I_\tau^L f)|$ and $|A_L(f) - A_\tau^{CR}(f)|$
with various $N$ and $\alpha$.  The results are shown in Figure~\ref{fig5}.
\begin{figure}[htbp]
\begin{center}
      \includegraphics[width=8cm]{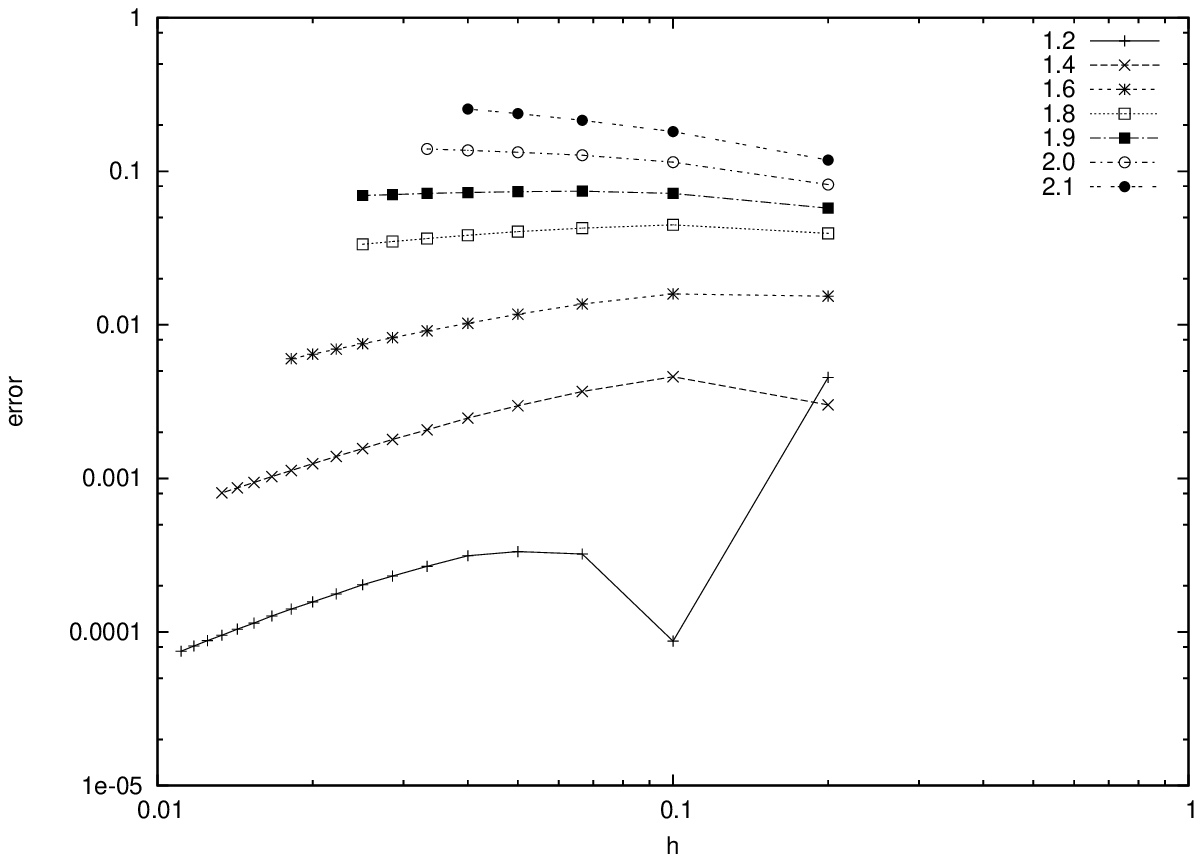} \\
      \includegraphics[width=8cm]{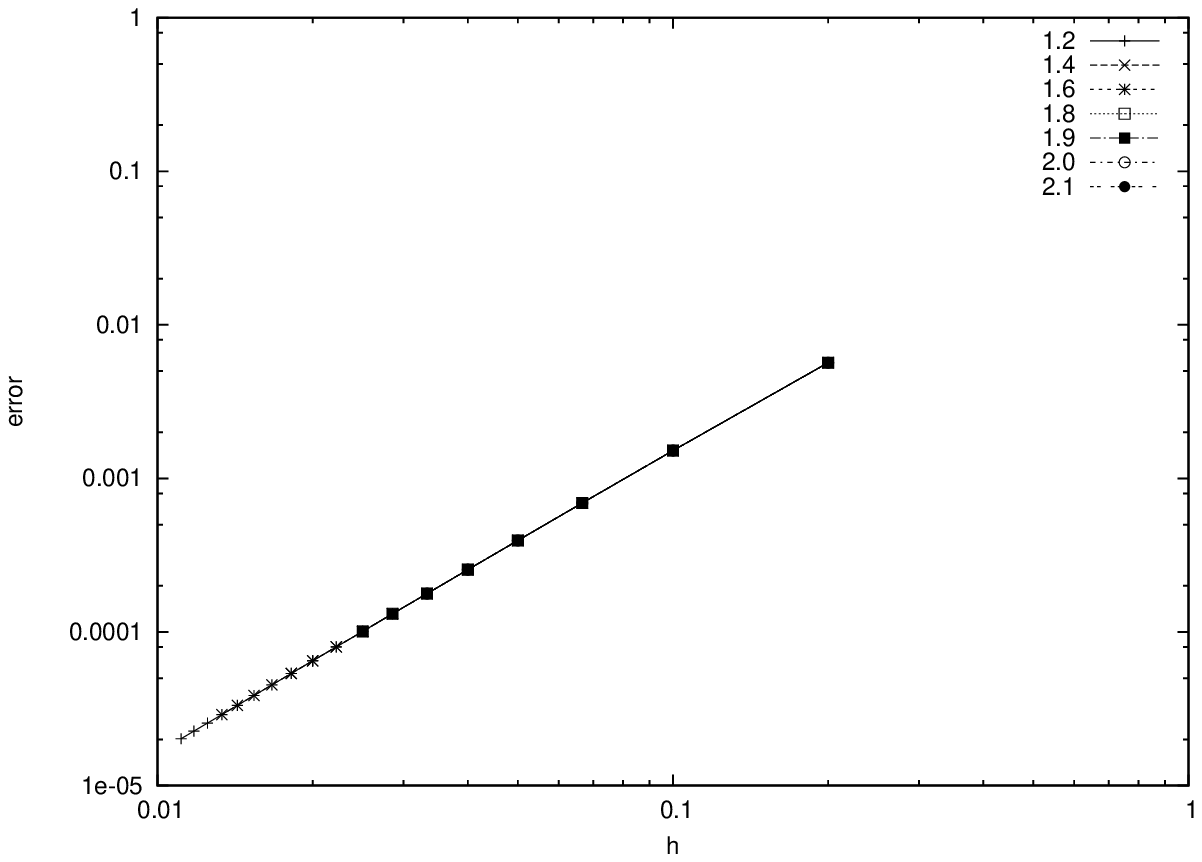}
\end{center}
 \caption{The errors of Lagrange (upper) and Crouzeix--Raviart
 (lower) interpolations.  The \textit{number} next to the \textit{symbol}
 indicates the value of $\alpha$.  The horizontal axis represents the
maximum size of triangles and the vertical axis represents
the errors $|A_E(\I_{\tau_k}^L f) - A_L(f)|$ (upper) and
 $|A_{\tau_k}^{CR}(f) - A_L(f)|$ (lower). }
\label{fig5}
\end{figure}

Note that, as predicted by the error estimations obtained in this
paper, the behavior of the error $|A_L(f) - A_\tau^{CR}(f)|$ does not
depend on $\alpha$ (all the curves overlap and look like just
one curve), whereas the error $|A_L(f) - A_E(\I_\tau^L f)|$
behaves differently as $\alpha$ varies.  We can also see that, when $N$
is small, the errors in the Lagrange interpolation behave strangely for
some reason that the authors cannot explain.

We obtained an alternative proof of the classical result by Young
(Theorem~\ref{Young}).  That is, we have shown that the areas of
the Lagrange interpolation of a surface (of class $W^{1,\infty}$) converge
to the area of the surface under the maximum angle condition on
the triangulation.  The authors conjecture that the same result holds under
the circumradius condition.
Moreover, we showed that the areas of the Crouzeix--Raviart interpolation
of a surface (of class $W^{1,\infty}$) converge to the area of the
surface without any geometric condition on the triangulation.

The authors believe that the results of this paper provide a new insight on
the definition of surface area and related subjects.  In the following,
we mention some immediate problems that arise from this study.
\begin{itemize}
 \item Prove or disprove the conjecture that
Theorem~\ref{Young} holds under the circumradius condition on
triangulations.
\item  The surface area in the sense of Lebesgue is defined using Lagrange
interpolation (or using the subspace $S_{\tau_n}$).  Can we give an
alternate definition of surface area using Crouzeix--Raviart
interpolation (or using a corresponding finite dimensional space) that is
equivalent to the original definition?
\item All the results in this paper are proved under the assumption
$A_L(f) < \infty$.  Let $f \in C^0(\Omega)$ and $\{\tau_k\}$ be a
sequence of triangulations such that
$\lim_{k \to \infty} |\tau_k| = 0$.  In this case, the Crouzeix--Raviart
interpolation $\I_{\tau_k}^{CR}f$ is well-defined.  Suppose that
$\limsup_{k \to \infty} A_{\tau_k}^{CR}(f) < \infty$.
Then, can we show that $A_L(f) < \infty$?  If not, give a
counter-example. 
\item Extend Theorem~\ref{thm:main} to the case of the volume
of the graph of a function with $d$ variables, $d \ge 3$.
\end{itemize}

The authors hope this paper will inspire further research and that one
or more of the above-mentioned questions will be solved in the near future.

\vspace{0.3cm}
\textsc{Acknowledgements.}
The authors were supported by JSPS KAKENHI Grant Numbers
JP16H03950, JP25400198, and JP26400201.

\end{document}